\numberwithin{equation}{section}
\newcommand{\Fbar}{\overline{\mathbb{F}}}
\renewcommand{\ss}{\mathrm{ss}}
\newcommand{\ZZ}{\mathbb{Z}}
\newcommand{\cA}{\mathcal{A}}
\newcommand{\cO}{\mathcal{O}}
\newcommand{\cM}{\mathcal{M}}
\newcommand{\cS}{\mathcal{S}}
\newcommand{\cJ}{\mathcal{J}}
\newcommand{\F}{\mathbb{F}}
\newcommand{\PGL}{\mathrm{PGL}}
\renewcommand{\epsilon}{\varepsilon}
\renewcommand{\P}{\mathbb{P}}
\renewcommand{\cJ}{\mathcal{J}}
\newcommand{\dirlim}{\varinjlim}
\newcommand{\GL}{\mathrm{GL}}
\newcommand{\rank}{\mathrm{rank}}
\newcommand{\Aut}{\mathrm{Aut}}
\newcommand{\prank}{p\text{-}\rank}
\newcommand{\tworank}{2\text{-}\rank}
\newcommand{\<}{\left \langle}
\renewcommand{\>}{\right \rangle}
\newtheorem*{rep@thm}{\rep@title}
\newcommand{\newrepthm}[2]{%
\newenvironment{rep#1}[1]{%
 \def\rep@title{#2 \ref{##1}}%
 \begin{rep@thm}}%
 {\end{rep@thm}}}
\newtheorem{thm}{Theorem}[section]
\newtheorem{lem}[thm]{Lemma}
\newtheorem{prop}[thm]{Proposition}
\newtheorem{cor}[thm]{Corollary}
\theoremstyle{definition}
\theoremstyle{remark}
\newtheorem{rem}[thm]{Remark}
\newtheorem{exmp}[thm]{Example}
\begin{document}

\author{Du\v san Dragutinovi\'c}
\keywords{Curves, Jacobians, supersingular, automorphisms, finite fields, characteristic $2$}
\address{Mathematical Institute, Utrecht University, 3508 TA Utrecht, The Netherlands}
\email{d.dragutinovic@uu.nl}

\title[An unusual family of supersingular curves of genus five]{An unusual family of supersingular curves \\  of genus five in characteristic two}

\begin{abstract}
We construct a family of smooth supersingular curves of genus $5$ in characteristic $2$ with several notable features: its dimension matches the expected dimension of any component of the supersingular locus in genus $5$, its members are non-hyperelliptic curves with non-trivial automorphism groups, and each curve in the family admits a double cover structure over both an elliptic curve and a genus-$2$ curve. We also provide an explicit parametrization of this family.
\end{abstract}
\maketitle

\section{Introduction}
\label{sec:intro}
An elliptic curve $E$ defined over a field $k$ of positive characteristic $p > 0$ is supersingular if $\#E[p](\overline{k}) = 1$; that is, if the neutral element is the only $p$-torsion point of $E$. 
By \cite{deuring}, there are only finitely many isomorphism classes of supersingular elliptic curves over $k = \Fbar_p$. In particular, up to isomorphism, there is a unique supersingular elliptic curve over $\Fbar_2$, for example, $E: y^2 + y = x^3$.
A smooth curve $C$ of genus $g\geq 2$ over $k$ is supersingular if there exists an isogeny $\cJ_C \to E^g$ defined over~$\overline{k}$, where $\cJ_C$ is the Jacobian of $C$ and $E$ is any supersingular elliptic curve over $\overline{k}$. 
A famous result of van der Geer and van der Vlugt (\cite[Theorem 2.1]{vdgvdv}) shows that for every~$g \geq 2$, there exists a smooth supersingular curve $C_{GV, g}$ of genus $g$ defined over~$k = \Fbar_2$. 

In this article, we study the locus $\cM_g^{\ss}$ of supersingular curves in the moduli space $\cM_g$ of smooth genus-$g$ curves over $k$ in the case $g = 5$ and $k = \Fbar_2$. The cases $g \leq 4$ have already been discussed in the literature. 
For $g = 2$, it follows from \cite[Section 2]{igusa} that $\cM_2^{\ss}$ is irreducible of dimension $1$ over $\Fbar_2$. For $g = 3$, it was shown in \cite[Theorem 1.12, Theorem 5.6]{oort_hess} that $\cM_3^{\ss}$ is pure of dimension $2$ and that there are no smooth hyperelliptic supersingular curves of genus $3$ over $\Fbar_2$.
For $g = 4$, it was shown in \cite[Theorem 3.4]{dra2} that $\cM_4^{\ss}$ is pure of dimension $3$ over $\Fbar_2$.

Before presenting our results, we summarize a few properties relevant for our discussion:
\begin{itemize}
    \item In \cite[Proposition~4.4]{vdgvdv} a positive-dimensional family $\mathcal{F}_{GV, g}$ of supersingular curves $C_{GV, g}$ over~$\Fbar_2$ for any $g > 1$ was constructed. \emph{In particular, $\dim \mathcal{F}_{GV, 5} = 2$.} 

    \item \emph{If the supersingular locus $\cM_5^{\ss}$ in $\cM_5$ is non-empty in characteristic $p$ (which is true for $p = 2$), any component of $\cM_5^{\ss}$ has dimension at least~$3$, and the expected dimension of $\cM_5^{\ss}$ equals $3$.} Namely, by \cite[Theorem 4.9]{lioort}, the supersingular locus $\cS_g \subseteq \cA_g$ of principally polarized abelian varieties of dimension $g$ is pure of dimension $\lfloor g^2/4 \rfloor$ over $\Fbar_p$ for any $g \geq 1$ and $p > 0$. Since $\cA_g$ is a smooth stack, the intersection $\cJ_g \cap \cS_g$ has dimension at least $3$ when $g = 5$, where $j: \cM_g \to \cA_g$ is the Torelli map and $\cJ_g = \overline{j(\cM_g)}$, and it would be exactly $3$ if $\cJ_g$ is dimensionally transverse to $\cS_g \subseteq \cA_g$. 

    \item By a conjecture of Oort (\cite[Problem 4]{emo01}), given $g > 1$ and $p > 0$,  the generic point of every component of the supersingular locus $\cS_g$ of $\cA_g$ over $\Fbar_p$ has automorphism group $\{\pm 1\}$. So far, this conjecture was shown to be true for $g = 2, 3$ if~$p>2$ with counterexamples for $(g, p) \in \{(2, 2), (3, 2)\}$, for $g = 4$ if $p>0$, and $g = 2n$ if~$p>3$ for any $n \in \ZZ_{>2}$; see \cite{ibukiyama}, \cite{kp}, \cite{oort_hess} \cite{kyy}, \cite{dra4}, \cite{vkcfy}. An analog of this conjecture for curves would state the following: \emph{Assuming that the generic point of a component of the supersingular locus of $\cM_g$ corresponds to a non-hyperelliptic curve, its automorphism group is trivial.}
\end{itemize}

Smooth curves of genus $5$ can be either hyperelliptic, trigonal, or non-hyperelliptic and non-trigonal, whose canonical model in $\P^4$ is a complete intersection of three quadrics; see~\cite[Section IV.5]{hag}. 
Using this classification, the data of all smooth curves of genus $5$ over $\F_2$, up to isomorphism, were determined in \cite{dra1}; similar data for curves of genus~$4$ (resp.~$6$) over~$\F_2$ were obtained in \cite{xarles} (resp.~\cite{huangkedlayalau}). According to the data, all non-hyperelliptic supersingular curves of genus $5$ over $\F_2$ with a non-trivial $\F_2$-automorphism group are non-trigonal, and their canonical models lie on the quadric $X^2 + YZ = 0$ in~$\P^4$, up to a linear change of coordinates (in the dataset, this quadric is actually taken to be~$Y^2 + XZ + YZ = 0$).

This intriguing property motivated our study of non-hyperelliptic, non-trigonal curves of genus $5$ over $\Fbar_2$ whose canonical model lies on the quadric $X^2 + YZ = 0$. As our main result, we prove the existence of a family of smooth supersingular curves of genus $5$ over $\Fbar_2$ exhibiting rather unusual properties. More precisely, we have the following theorem.

\begin{thm}
Let $b_1, b_2, b_3 \in \Fbar_2$, $b_3 \neq 0$ and let $C = V(q_1, q_2, q_3) \subseteq \P^4$, where
\begin{equation}
\begin{cases}
q_1 = X^2 + YZ, \\
q_2 = T^2 + ZT + XY,\\
q_3 = U^2 + UZ + b_1ZT + b_2XZ + b_3XT + b_3^2YT, 
\end{cases} 
\label{eqn:supersingular_3dimfamily_genus5}
\end{equation}
Then, $C$ is a smooth canonical curve of genus $5$ over $\Fbar_2$ which is supersingular and satisfies the following properties: 
\begin{enumerate}
    \item $\ZZ/2\ZZ \times \ZZ/2\ZZ \subseteq \Aut(C)$;
    \item There exists a double cover $C \to E$, where $E$ is an elliptic curve;
    \item There exists a double cover $C \to D$, 
    where $D$ is a smooth curve of genus $2$.
    \item The $a$-number of $C$ equals $a(C) = 5 - \mathrm{rank}_{\Fbar_2}(F|_{H^1(C, \cO_C)}) = 2$, where $F$ is the Frobenius operator on $C$;
\end{enumerate}
Furthermore, let $\cS$ denote the family which consists of the isomorphism classes of all curves~$C$ as in \eqref{eqn:supersingular_3dimfamily_genus5}. Then $\dim \cS = 3$.   
\label{thm:supersingular_family_genus5_char2_dim3}
\end{thm}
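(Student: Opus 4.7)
My plan is to construct the $(\ZZ/2\ZZ)^2$-action and the two double covers explicitly, then use a Kani--Rosen decomposition of $\cJ_C$ to reduce the supersingularity and $a$-number statements to computations on smaller curves, and finally bound the moduli dimension by analyzing admissible linear changes of variable on $\P^4$.

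\textbf{Parts (1)--(3): automorphisms and quotients.} I first verify via the Jacobian criterion that $C$ is smooth when $b_3 \neq 0$, so $g(C) = 5$ as a smooth $(2,2,2)$-complete intersection. For (1), I would exhibit two commuting involutions of $\P^4$ preserving the ideal $(q_1, q_2, q_3)$. The first is $\sigma_1 \colon U \mapsto U + Z$, which preserves $q_3$ on the nose thanks to the Artin--Schreier piece $U^2 + UZ$. For the second I try $\sigma_2 \colon T \mapsto T+Z,\; U \mapsto U + aZ + b_3 X$; a direct expansion gives $\sigma_2(q_3) - q_3 = (a^2 + a + b_1)Z^2 + b_3^2 q_1$, so picking $a \in \Fbar_2$ with $a^2 + a = b_1$ makes $\sigma_2$ preserve the ideal. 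The two involutions commute, giving $(\ZZ/2\ZZ)^2 \subseteq \Aut(C)$. The quotient $C/\sigma_1$ is visibly $V(q_1, q_2) \subset \P^3$, a smooth $(2,2)$-complete intersection of arithmetic genus $1$, giving the elliptic curve $E$ of (2). For (3), consider $D := C/\sigma_2$: the unique set-theoretic fixed point of $\sigma_2$ on $C$ is $(0{:}1{:}0{:}0{:}0)$, and the function $y = T/Z$ satisfies $\sigma_2(y) = y + 1$ with $y^2 + y = 1/X^3$ on the base $E$. This presents $C \to D$ as an Artin--Schreier cover with a single wild ramification point of pole order $3$, so Riemann--Hurwitz gives a different of degree $4$ and $g(D) = 2$; the analogous computation for $C/\sigma_1\sigma_2$ gives $g(D') = 2$ as well.

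\textbf{Parts (4)--(5) and dimension.} Kani--Rosen applied to the $(\ZZ/2\ZZ)^2$-action gives $\cJ_C \sim E \times \cJ_D \times \cJ_{D'}$. The elliptic quotient $E$ transforms via $u = 1/X,\ v = T/Z$ into Weierstrass form $v^2 + v = u^3$, the unique supersingular elliptic curve over $\Fbar_2$, so $E$ is supersingular. For $D$ and $D'$, I would eliminate $T$ using $q_2$ to obtain explicit hyperelliptic Artin--Schreier models of the form $V^2 + V = b_3^2 u^5 + a^2 u^3 + b_2 u$ (and its twin with $a \mapsto a+1$) and verify supersingularity by computing their Hasse--Witt matrices on the standard basis of $H^0(\Omega^1)$ for these AS models. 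Supersingularity of $C$ then follows from the decomposition. For (4), I compute the Cartier operator on an explicit basis of $H^0(C, \Omega^1_C)$ coming from the complete-intersection resolution and check that its rank equals $3$, yielding $a(C) = 2$. For the dimension statement, the parameter locus $\{(b_1, b_2, b_3) \in \Fbar_2^3 : b_3 \neq 0\}$ is irreducible of dimension $3$; any isomorphism $C_b \cong C_{b'}$ extends to a linear automorphism of $\P^4$ preserving the ideal $(q_1, q_2, q_3)$, and I would show that the stabilizer in $\PGL_5$ of the distinguished quadric $q_1 = X^2 + YZ$ acts on $(b_1, b_2, b_3)$ with finite orbits, forcing $\dim \cS = 3$.

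\textbf{Main obstacle.} The genuinely hard step will be the supersingularity of the two genus-$2$ quotients $D$ and $D'$: Kani--Rosen reduces everything to this question, but confirming it still requires concrete characteristic-$2$ Hasse--Witt calculations on the explicit AS models, and the same computations must be packaged carefully to pin down $a(C) = 2$ exactly (rather than $a(C) \in \{3, 4, 5\}$, each still compatible with supersingularity alone). A secondary subtlety is the wild-ramification bookkeeping for the quotients by $\sigma_2$ and $\sigma_1\sigma_2$, which has to be done locally at the single fixed point using the explicit Artin--Schreier presentation above.
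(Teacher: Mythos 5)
Your proposal is correct and follows essentially the same route as the paper: the same two involutions $U \mapsto U+Z$ and $T \mapsto T+Z,\ U \mapsto U + aZ + b_3X$ (with $a^2+a=b_1$), the same quotients (the supersingular elliptic curve $y^2+y=x^3$ and two genus-$2$ curves), the same decomposition $\cJ_C \sim E \times \cJ_D \times \cJ_{D'}$ (you cite Kani--Rosen where the paper argues the isogeny directly), the same $a$-number computation via the rank of the Cartier/Hasse--Witt matrix, and the same finiteness-of-isomorphisms argument for $\dim \cS = 3$. Your plan to certify supersingularity by explicit Hasse--Witt computations on the genus-$2$ Artin--Schreier quotients (rather than the paper's $\tworank(C)=0$ computation on $C$ itself) is exactly the alternative the paper records in its remark, where one can also just invoke Scholten--Zhu, since $\prank$ zero implies supersingular in genus at most $2$.
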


Note that $\dim \cS$ equals the expected dimension of the supersingular locus $\cM_5^{\ss}$ in $\cM_5$ and $\dim \cS > \dim \mathcal{F}_{GV, 5}$. Furthermore, the generic point of $\cS$ has a non-trivial automorphism group. Therefore, if the closure~$\overline{\cS}$ of~$\cS$ turns out to be a component of $\cM_5^{\ss}$, it would provide a counterexample to the analog of Oort’s conjecture in the case $g = 5$ and $p = 2$. While we were not able to decide this, we comment more on this possibility in Corollary~\ref{cor:ss_genus5_component_or_not}. Also, it is worth noting that $\cS$ is a sublocus of the loci of double covers of both elliptic curves and curves of genus~$2$.
Lastly, note that by \cite[Corollary~10.3]{lioort}, the generic point of every component of $\cS_g \subseteq \cA_g$ has $a$-number~$1$ for any $g > 1$ and $p > 0$, whereas the $a$-number of~$\cS$ equals~$2$, resulting in another unusual property of~$\cS$. We also present further discussion after proving this theorem in Section~\ref{sec:proof}. (While Section \ref{sec:genus5ssauto} explains how we obtain the family~$\cS$, verifying that $\cS$ has the property stated in Theorem \ref{thm:supersingular_family_genus5_char2_dim3} relies only on Lemmas \ref{lem:smooth_dim1} and~\ref{lem:description_of_all_isomphs} and Propositions~\ref{prop:genus5ss_dimension_automorphisms} and \ref{prop:genus5ss_doublecover_bielliptic_bi2}.) 

For some of the computations, we use \cite{sagemath} and refer to \url{https://github.com/DusanDragutinovic/Examples}.
\\
\\
\noindent \textbf{Outline.} In Section \ref{sec:prelim}, we recall definitions and results on invariants of curves in positive characteristic, such as the $p$-rank and Newton polygon, and explain how we compute the $p$-rank and geometric automorphism groups of the curves we study. In Section \ref{sec:genus5ssauto}, we consider the family of all non-hyperelliptic, non-trigonal genus-$5$ curves over $\Fbar_2$ whose canonical models lie on the intersection of three quadrics in $\P^4$, one of which has rank $3$. 
In Lemma~\ref{lem:reduce_num_of_part}, we reduce the number of parameters needed to describe non-isomorphic curves in this family; in Lemma \ref{lem:2rank0_genus5_firstequation}, we determine which have $p$-rank $0$; and in Proposition \ref{prop:genus5ss_dimension_automorphisms}, we compute their geometric automorphism groups. This analysis leads to the construction of the supersingular family $\cS$. 
Finally, in Section \ref{sec:proof}, we prove Theorem \ref{thm:supersingular_family_genus5_char2_dim3} and discuss some relevant questions.
\\
\\
\textbf{Acknowledgment}
The author is grateful to Carel Faber and Valentijn Karemaker for helpful discussions and valuable comments, and to the anonymous referees for their careful reading and constructive remarks that improved this article.  The author is supported by the Mathematical Institute of Utrecht University. 

\section{Preliminaries}
\label{sec:prelim}

In this section, we review definitions and results concerning invariants of curves in positive characteristic, including the $p$-rank, $a$-number, Newton polygon, and the supersingular Newton polygon. We also describe our methods for computing the $p$-rank and geometric automorphism groups of genus-$5$ curves over $\Fbar_2$ that arise as complete intersections of three quadrics in $\P^4$, one of which has rank $3$.

\subsection{Invariants of curves in positive characteristic} Let $K$ be a perfect field of characteristic $p>0$ and let $\sigma: K \to K$ the Frobenius of $K$. In the case when $K$ is algebraically closed, we will denote it by $K = k$. 
Furthermore, let $C$ be a smooth curve over an algebraically closed field $k$ of genus $g \geq 2$ and let $\cJ_C$ be its Jacobian, which is a principally polarized abelian variety of dimension $g$ over $k$.

\subsubsection{The $p$-rank and the $a$-number of a smooth curve}
Let 
\begin{equation}
F: H^{1}(C, \cO_C) \to H^{1}(C, \cO_C) 
\label{eqn:frobenius_operator}
\end{equation}
be the $\sigma$-linear Frobenius operator on $C$. 
The $\prank$ of $C$, denoted by $f(C)$, is the semisimple rank of $F$, i.e., $$f(C) = \rank_{k}(F^{g}\mid_{H^1(C, \cO_C)}), \quad 0 \leq f(C) \leq g, $$ while the $a$-number of $C$, denoted by $a(C)$, is the number $$a(C) = g - \rank_k(F\mid_{H^1(C, \cO_C)}), \quad 0 \leq a(C) \leq g.$$
Equivalently, one can define the $p$-rank of $C$ as the number $0 \leq f = f(C) \leq g$ such that~${\#\cJ_C[p](k) = p^f}$. In this article, we will usually write $$\tworank(C) = f(C)$$ for the $\tworank$ of a curve $C$ defined over $k = \Fbar_2$.

Given a matrix $M\in k^{g\times g}$, for some $g \geq 2$, we denote by $M^{\sigma^n}$ the matrix obtained from $M$ by raising each of its entries to the $p^{n}$-th power, so if $M = (m_{ij})$, then $M^{\sigma^n} = (m_{ij}^{p^n})$, $n\geq 1$. In the case when $C$ is a smooth curve of genus $g$, the matrix of the Frobenius operator \eqref{eqn:frobenius_operator} with respect to a fixed basis of $H^1(C, \cO_C)$ is called the Hasse-Witt matrix, denoted by~$H = H_C$; the Hasse-Witt matrix of a smooth curve $C$ is unique up to relation $H \sim SH(S^{\sigma})^{-1},$ corresponding to the changes of bases of $H^{1}(C, \cO_C)$ induced by transition matrices $S \in \GL_{g}(k)$. If $H$ is the Hasse-Witt matrix of $F$ as in \eqref{eqn:frobenius_operator}, then the operator $F^n = F \circ \ldots \circ F$ is represented by the matrix $$H\cdot H^{\sigma}\cdot \ldots \cdot H^{\sigma^{n-1}}, n\geq 1.$$ In particular, $$f(C) = \rank_k (H\cdot H^{\sigma}\cdot \ldots \cdot H^{\sigma^{g-1}})\quad \text{and}\quad a(C) = g - \rank_k(H).$$

\subsubsection{The Newton polygon of a smooth curve and supersingular curves}
\label{subsub:np_of_smooth_curves}

Let $\cJ_C[p^{\infty}] = \dirlim \cJ_C[p^n]$ be the $p$-divisible group of $\cJ_C$. By the Dieudonn\'e-Manin classification \cite{manin}, there are certain $p$-divisible groups $G_{c, d}$ for $c, d\geq 0$ relatively prime integers and there is an isogeny of $p$-divisible groups \[\cJ_C[p^{\infty}] \sim \bigoplus_{\lambda = \frac{d}{c + d}}G_{c, d},\] 
for a unique choice of \textit{slopes} $\lambda$. If a slope $\lambda$ appears in this decomposition with multiplicity~$m$, then so does the slope $1 - \lambda$. The Newton polygon of $C$ is defined as the collection~$\xi(C)$ of those $\lambda$ counted with multiplicities. We usually think of Newton polygon $\xi(C)$ of $C$ as the 
connected union of finitely many line segments in the plane, which is lower convex and breaks at points with integer coordinates, starting at $(0, 0)$ and ending at $(2g, g)$. The slopes $\lambda$ are the slopes of these line segments.

If $C$ is a smooth curve of genus $g$ defined over a finite field $\F_p$ with $p$ prime, we can compute its Newton polygon $\xi(C)$ as follows. Consider the zeta function of $C$, that is 
$Z(C/\F_p, t) = \exp(\sum_{n \geq 1} \#C(\F_{p^n}) t^n/n)$. By the Weil conjectures, 
$$Z(C/\F_p, t) = \frac{L(C/\F_p, t)}{((1 - t)(1 - pt)},$$ where $L(C/\F_p, t) = \sum_{i=0}^{2g} a_i t^i$ is the $L$-polynomial of $C$. The Newton polygon of $C$ then equals the lower convex hull of the points $(i, \mathrm{val}_p(a_i))$ for $i = 0, \dots, 2g,$ where $\mathrm{val}_p$ denotes the normalized $p$-adic valuation; see, e.g., \cite[Section 2]{dra2}.

If the Newton polygon $\xi(C)$ is a straight line starting at $(0, 0)$ and ending at $(2g, g)$, or altenatively if all slopes of $\xi(C)$ equal $\frac{1}{2}$, we say that $C$ is \emph{supersingular}. Alternatively, it follows from \cite[1.4]{lioort} that a smooth curve~$C$ of genus $g$ over a field $K$ is supersingular if and only if there exists an isogeny $$\cJ_C \overset{\sim}{\to} E^g$$ over~$k = \overline{K}$, where $E$ is any supersingular elliptic curve over $k$.

The number of slopes $0$ in $\xi(C)$ equals the $p$-rank of $C$. Consequently, if $C$ is supersingular, then $f(C) = 0$. The reverse implication holds for $g = 2$, but not necessarily for~$g \geq 3$.

\subsubsection{The purity of de Jong and Oort} 
\label{subsec:dejongoort_purity}
By the purity theorem of de Jong and Oort, if a family $\mathcal{F}$ of smooth curves of genus $g$ with constant Newton polygon $\xi$ contains in its closure in $\cM_g$ elements with a different Newton polygon, then this already occurs in codimension~$1$. That is, there exists a family $\mathcal{F}' \subseteq \cM_g$ of dimension $\dim \mathcal{F}' = \dim \mathcal{F} - 1$, whose generic Newton polygon is $\xi' \neq \xi$, such that $\mathcal{F}' \subsetneq \overline{\mathcal{F}}$. This result is well known and holds much more generally; for the precise statement, see \cite[Theorem 4.1]{dejongoort}. A similar result also holds for the $p$-rank (and can in fact be seen as a consequence of de Jong-Oort purity for suitably chosen Newton polygons). 

\subsection{Computing Hasse-Witt matrices and automorphism groups of curves}

We will be interested in explicitly computing the $p$-rank and the automorphism group of certain genus-$5$ curves in characteristic $p = 2$ throughout this article. To do this, we will use the description provided below.

To compute the Hasse-Witt matrices of the curves studied in this article, we will use the description provided by Kudo-Harashita in \cite{kudoharashita}, as follows. 
Given a perfect field $K$ of characteristic $p>0$ and a smooth curve $C = V(q_1, \ldots, q_{n - 1})$ of genus $g$ which is a complete intersection in $\P^n$, defined by homogeneous polynomials $q_1, \ldots, q_{n - 1} \in K[X_0, \ldots, X_n]$, let us assume that $\sum_{i\neq j}\deg(q_{i})\leq n$ for every $1 \leq j \leq n - 1$.  
By \cite[Proposition B.2.2]{kudoharashita}, if we write $(q_1\ldots q_{n - 1})^{p - 1} = \sum c_{i_0, \ldots, i_n} X_0^{i_0}\ldots X_n^{i_n}$ and 
\begin{small}
$$\{(j_0, \ldots, j_n) \in (\ZZ_{<0}^{n + 1}): \sum_{m = 0}^nj_m = - \sum_{m = 1}^{n - 1}\deg(q_m)\} = \{(j_0^{(1)}, \ldots, j_n^{(1)}), \ldots, (j_0^{(g)}, \ldots, j_n^{(g)})\},$$     
\end{small}
then the Hasse-Witt matrix of $C$ is given by 
$$\begin{pmatrix}
c_{-j_0^{(1)}p + j_0^{(1)}, \ldots, -j_n^{(1)}p + j_n^{(1)}} & \ldots & c_{-j_0^{(g)}p + j_0^{(1)}, \ldots, -j_n^{(g)}p + j_n^{(1)}}\\

\vdots & \ddots & \vdots\\

c_{-j_0^{(1)}p + j_0^{(g)}, \ldots, -j_n^{(1)}p + j_n^{(g)}} & \ldots  & c_{-j_0^{(g)}p + j_0^{(g)}, \ldots, -j_n^{(g)}p + j_n^{(g)}}
 \end{pmatrix}.$$
 
Now, we specialize this description to the case of our interest: $p = 2$ and $g = 5$: a non-hyperelliptic non-trigonal curve $C$ of genus $5$ over $k = \Fbar_2$ can be canonically embedded in $\P^4$ as a complete intersection $C = V(q_1, q_2, q_3)$ where $q_1, q_2, q_3$  are homogeneous polynomials in $k[X, Y, Z, T, U]$ of degrees $$\deg(q_1) = \deg(q_2) = \deg(q_3) = 2.$$ If we write $q_1\cdot q_2\cdot q_3 = \sum c_{i_0, i_1, i_2, i_3, i_4}X^{i_0}Y^{i_1}Z^{i_2}T^{i_3}U^{i_4}$, the Hasse-Witt matrix of $C$ is given by 
\begin{equation}
H = \begin{pmatrix}
c_{2, 1, 1, 1, 1} & c_{0, 3, 1, 1, 1} & c_{0, 1, 3, 1, 1}& c_{0, 1, 1, 3, 1}& c_{0, 1, 1, 1, 3}\\
c_{3, 0, 1, 1, 1} & c_{1, 2, 1, 1, 1} & c_{1, 0, 3, 1, 1}& c_{1, 0, 1, 3, 1}& c_{1, 0, 1, 1, 3}\\ 
c_{3, 1, 0, 1, 1} & c_{1, 3, 0, 1, 1} & c_{1, 1, 2, 1, 1}& c_{1, 1, 0, 3, 1}& c_{1, 1, 0, 1, 3}\\
c_{3, 1, 1, 0, 1} & c_{1, 3, 1, 0, 1} & c_{1, 1, 3, 0, 1}& c_{1, 1, 1, 2, 1}& c_{1, 1, 1, 0, 3}\\
c_{3, 1, 1, 1, 0} & c_{1, 3, 1, 1, 0} & c_{1, 1, 3, 1, 0}& c_{1, 1, 1, 3, 0}& c_{1, 1, 1, 1, 2}
\end{pmatrix}. 
\label{eqn:hasse-witt-cone-genus5-kudoharashita}
\end{equation}

In the rest of this section, we present an observation we will use when computing the automorphism group of curves we are interested in. 

Given any prime power $q$, it was shown in Appendix~B of the 
preprint 
version of the paper \cite{fabergrantham} that 
all matrices in $\mathrm{GL}_n(\F_q)$ that fix the quadratic polynomial $X_1X_2 + \ldots + X_{m - 2}X_{m - 1} + X_m^2 \in \F_q[X_1, \ldots, X_n]$, $m\leq n$, are of the form
$\begin{pmatrix}
A_1 &  0\\
A_2 &  A_3
\end{pmatrix}$, where $A_1$ is an element of $\mathrm{GL}_{m}(\F_q)$ that fixes $X_1X_2 + \ldots + X_{m - 2}X_{m - 1} + X_m^2$ as an element of $\F_q[X_1, \ldots, X_m]$, $A_2$ is any $(n - m) \times m$ matrix with entries in $\F_q$, and $A_3 \in \mathrm{GL}_{n - m}(\F_q)$. 

Consider a homogeneous quadratic polynomial $q_1 = X^2 + YZ$ in $\Fbar_2[X, Y, Z]$.
The set of elements in $\PGL_3(\Fbar_2)$ which fix $q_1$ is of the form $$ \begin{pmatrix}
ad - bc & ac & bd \\ 
0 & a^2 & b^2\\ 
0 & c^2 & d^2
\end{pmatrix},$$ for any $a, b, c, d \in \Fbar_2$ such that $ad - bc \neq 0$. It follows all elements of $\PGL_5(\Fbar_2)$ that fix $X^2 + YZ$ as an element of the polynomial ring $\Fbar_2[X, Y, Z, T, U]$ are of the form 
\begin{equation}
\begin{pmatrix}
ad - bc & ac & {bd} & 0& 0\\ 
0 & a^2 & b^2 & 0& 0\\ 
0 & c^2 & d^2 & 0& 0\\ 
e_1 & f_1 & g_1 & h_1 & i_1\\
e_2 & f_2 & g_2 & h_2 & i_2

\end{pmatrix},
\label{eqn:elts_pgl5_fixing_cone}
\end{equation}
for some $a, b, c, d, e_j, f_j, g_j, h_j, i_j \in \Fbar_2$ for $j \in \{1, 2\}$ such that $h_1i_2 - i_1h_2 \neq 0$ and $ad - bc \neq 0$. (Namely, if $M \in \PGL_5(\Fbar_2)$ is a matrix that fixes $X^2 + YZ \in \F_2[X, Y, Z, T, U]$ as an element of the polynomial ring $\Fbar_2[X, Y, Z, T, U]$, then $M \in \PGL_5(\F_q)$ for some $\F_2 \subseteq \F_q \subseteq \Fbar_2$ and we use the preceding discussion to see that $M$ will be as in \eqref{eqn:elts_pgl5_fixing_cone}.)

\section{Curves of genus $5$ in characteristic $2$ lying on $X^2 + YZ = 0$ in $\P^4$}
\label{sec:genus5ssauto}

By \cite[Section IV.5]{hag}, every smooth curve of genus $5$ is either hyperelliptic, trigonal, or non-hyperelliptic and non-trigonal, in which case its canonical model in $\P^4$ is a complete intersection of three quadrics $$\{q_1 = 0, q_2 = 0, q_3 = 0\} \subseteq \P^4.$$ Using this description, \cite{dra1} generates data for all smooth genus $5$ curves over $\F_2$, classified up to $\F_2$-isomorphism. For each curve, the number of $\F_{2^n}$-points for $1 \leq n \leq 5$ (and hence its Newton polygon; see Subsection \ref{subsub:np_of_smooth_curves}) as well as its $\F_2$-automorphism group were determined.
  
Let us focus on smooth supersingular curves of genus $5$ over $\F_2$. Among the supersingular curves over $\F_2$, all hyperelliptic ones ($8$ in total) have an $\F_2$-automorphism group of size $2$, generated by the hyperelliptic involution, while all trigonal ones ($14$ in total) have trivial $\F_2$-automorphism groups (see \cite[Tables 6 and 7]{dra1}).
Among the non-hyperelliptic, non-trigonal supersingular curves, $14$ have trivial $\F_2$-automorphism groups (see \cite[Table~8]{dra1}), while the remaining $14$ have non-trivial groups (see \cite[Table 9]{dra1}).  

Remarkably, all $14$ non-hyperelliptic, non-trigonal supersingular curves $C$ with non-trivial $\F_2$-automorphism admit a model $\{q_1 = 0, q_2 = 0, q_3 = 0\} \subseteq \P^4$, where $q_1$ can be chosen as $q_1 = Y^2 + XZ + YZ$. After the change of coordinates ($X \mapsto X + Y, Y \mapsto X, Z \mapsto Z$), one may take $q_1 = X^2 + YZ$. Moreover, we observe that in many cases (in fact, in all of them, although we will not prove this) the curve admits a model in which $q_2$ involves four variables $X, Y, Z, T$ and does not involve the variable $U$. Below, we present three examples illustrating this observation.

\begin{exmp}
Let $C_1$ be the curve given by
\begin{small}
$C_1: \begin{cases} Y^2 + XZ + YZ = 0 \\ T^2 + U^2 + TX + TY + UY + Y^2 + TZ + XZ = 0 \\ T^2 + TX + UX + TY + Y^2 + TZ + UZ + Z^2 = 0 \end{cases}$\end{small} in $\P^4$, and let $C_2$ be the curve given by 
\begin{small}
$C_2: \begin{cases} Y^2 + XZ + YZ = 0 \\ T^2 + U^2 + UX + X^2 + Y^2 + TZ + UZ + XZ + Z^2 = 0 \\ U^2 + X^2 + XY + Y^2 + UZ + YZ = 0 \end{cases}$\end{small} in $\P^4$, and let $C_3$ be the curve given by
\begin{small}
$C_3: \begin{cases} Y^2 + XZ + YZ= 0 \\ U^2 + TX + UX + X^2 + XY + UZ + XZ + Z^2= 0 \\ T^2 + TX + UX + X^2 + Y^2 + TZ + XZ + YZ + Z^2= 0 \end{cases}$ 
\end{small} in $\P^4$. Curves $C_i = \{q_1 = 0, q_2 = 0, q_3 = 0\}$, for $i = 1, 2, 3$, appear on the list \cite[Table 9]{dra1} of non-hyperelliptic, non-trigonal supersingular curves $C$ of genus $5$ over $\F_2$ with non-trivial $\F_2$-automorphism; their $\F_2$-automorphism groups are of the cardinality $\#\mathrm{Aut}_{\F_2}(C_i) = 2^i$.

As already mentioned, they all lie on the quadric $q_1 = Y^2 + XZ + YZ = 0$. Let us show that, for each $i$, there is a model $$\{q_1' = 0, q_2' = 0, q_3' = 0\}$$ of $C_i = \{q_1 = 0, q_2 = 0, q_3 = 0\}$,  where $q_1' = q_1 = Y^2 + XZ + YZ$ and $q_2'$ is a polynomial in only $4$ variables (moreover, we can take them to be $X, Y, Z, T$). Note that $C_1$ lies on the quadric $q_2' = q_2 + q_3 =  U^2 + UX + UY + UZ + XZ + Z^2 = 0$, while $C_2$ lies on the quadric $q_2' =  q_2 = U^2 + X^2 + XY + Y^2 + UZ + YZ = 0$. (The change of coordinates $(T \mapsto U, U \mapsto T)$ establishes $q_2' \in k[X, Y, Z, T]$ for $i = 1, 2$.) Lastly, note that $C_3$ lies on $q_2' = q_2 + q_3 = U^2 +  T^2 +  UZ + TZ + Y^2 + XY + YZ = 0$. The change of coordinates $(T \mapsto T + U, U \mapsto U)$ fixes $q_1' = q_1$ and establishes a model of $C_3$ which lies on $q_1' = 0$ and $q_2' = T^2 + TZ + Y^2 + XY + YZ = 0$. 
\end{exmp}

Motivated by the preceding discussion, let us consider a non-hyperelliptic non-trigonal curve $C$ of genus $5$ defined over $k = \Fbar_2$ whose canonical model is defined by
\begin{equation}
C: \{q_1 = 0, q_2 = 0, q_3 = 0\} \text{ in } \P^4_{(X:Y:Z:T:U)},    
\label{eqn:starting_genus5_family_on_a_cone}
\end{equation}
where $q_1 = X^2 + YZ$ and \begin{align*}
q_2 = a_1XY + a_2XZ + a_3XT + a_4Y^2 +  a_5YZ + a_6YT + a_7Z^2 +  a_8ZT + T^2,
\end{align*}
\begin{align*}
q_3 &= b_1XY + b_2XZ + b_3XT + b_4Y^2 + b_5YZ + b_6YT + b_7Z^2 + b_8ZT \\
    &+ b_9XU + b_{10}YU + b_{11}ZU + b_{12}TU + U^2.     
\end{align*}
for some $a_i, b_j \in k$, $1\leq i\leq 8$, $1\leq j \leq 12$.

Let $C$ be a variety as in \eqref{eqn:starting_genus5_family_on_a_cone}, and assume that it is a smooth curve of genus $5$. Then $C$ is canonically embedded in $\P^4$, and any isomorphism $\phi: C \cong C'$, where $C' \subseteq \P^4$ is another smooth canonical curve of genus $5$, is induced by a projective transformation $M \in \PGL_5(k)$: $$\phi = \phi_M: (X, Y, Z, T, U)^t \mapsto M\cdot (X, Y, Z, T, U)^t.$$ Given~$M \in \PGL_5(k)$ and a homogeneous quadratic polynomial $q' \in k[X, Y, Z, T, U]$, we denote by $M.q'$ the result of the action of $M$ on $q'$, induced by $\phi_M$. In particular, note that $M.q_1 = q_1$ for $q_1$ as in \eqref{eqn:starting_genus5_family_on_a_cone} and $M\in \PGL_5(k)$ as in \eqref{eqn:elts_pgl5_fixing_cone}.

In this section, we study the family of curves $C$ given in \eqref{eqn:starting_genus5_family_on_a_cone}, with parameters $a_i$ and~$b_j$, in order to arrive at the family $\cS$ from Theorem \ref{thm:supersingular_family_genus5_char2_dim3}. The rough strategy is as follows. In Subsection \ref{sub:part1}, we reduce the number of parameters needed to capture all isomorphism classes (Lemma \ref{lem:reduce_num_of_part}) and identify those with $\tworank = 0$ (Lemma \ref{lem:2rank0_genus5_firstequation}). In Subsection \ref{sub:part2}, we describe properties of the resulting curves (Lemmas \ref{lem:smooth_dim1} and \ref{lem:description_of_all_isomphs}), and in particular study their automorphisms (Propositions \ref{prop:genus5ss_dimension_automorphisms} and \ref{prop:genus5ss_doublecover_bielliptic_bi2}).

\subsection{Reducing the number of parameters}
\label{sub:part1}
In this subsection, we will focus on finding a model of $C$ as in \eqref{eqn:starting_genus5_family_on_a_cone}, i.e., an isomorphism $\phi: C \cong C'$, 
where $C'$ is a smooth canonical curve of genus $5$ in $\P^4$
that also lies on the quadric $X^2 + YZ = 0$. For that purpose, we will focus on isomorphisms $\phi = \phi_M$ induced by matrices $M$ as in \eqref{eqn:elts_pgl5_fixing_cone}, which fix the quadric~$X^2 + YZ$. 
Note that $X^2 + YZ \in k[X, Y, Z, T, U]$ is, up to a change of coordinates, the unique quadric of rank $3$; see Appendix B of the preprint version of \cite{fabergrantham}.

In the following two claims, we will assume that the variety $C \subseteq \P^4$ as in \eqref{eqn:starting_genus5_family_on_a_cone} is actually a smooth canonical curve of genus~$5$ to arrive at a simpler model of this curve that we will use afterwards. We present our first lemma below.

\begin{lem}
\label{lem:reduce_num_of_part}
Let $C$ be a variety defined by \eqref{eqn:starting_genus5_family_on_a_cone} for some $a_i, b_j \in k$, $1\leq i\leq 8$, $1\leq j \leq 12$. Assume that $C$ is a smooth canonical curve of genus $5$ over $k$. 
Then, after a possible change of coordinates, we may assume that $C$ has the following model:
\begin{equation}
C: \{q_1 = 0, q_2 = 0, q_3 = 0\} \text{ in } \P^4,    
\label{eqn:semimodel_genus5_ff2}
\end{equation}
where $q_1 = X^2 + YZ$, $q_2 = XY + a_3XT + ZT + T^2$, and $$q_3 = b_2XZ + b_3XT + b_6YT + b_8ZT + b_9XU + b_{10}YU + b_{11}ZU + b_{12}TU + U^2.$$
\end{lem}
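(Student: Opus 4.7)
The plan is to exploit two sources of freedom: (i) projective changes of coordinates by elements of $\mathrm{Fix}(q_1) \subset \PGL_5(k)$, whose general form is given in~\eqref{eqn:elts_pgl5_fixing_cone}, and (ii) replacements of the ideal generators $q_2 \leadsto \lambda q_2 + \alpha q_1$ and $q_3 \leadsto \mu q_3 + \beta q_1 + \gamma q_2$ (with $\lambda, \mu \in k^\times$), which leave the ideal $(q_1, q_2, q_3)$ unchanged. Working in characteristic $2$, these operations give ample room to pass from the general model~\eqref{eqn:starting_genus5_family_on_a_cone} to the target~\eqref{eqn:semimodel_genus5_ff2}, and I would proceed in two stages, normalizing $q_2$ first and then $q_3$.

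For Stage~1 (normalizing $q_2$), the top-left $3 \times 3$ block of~\eqref{eqn:elts_pgl5_fixing_cone} realizes $\mathrm{Aut}(V(q_1|_{\P^2})) \cong \PGL_2(k)$ acting on the smooth conic $V(X^2+YZ) \subset \P^2$, and so acts on the pure-$(X,Y,Z)$ quadratic part of $q_2$ modulo $q_1$ via the $5$-dimensional $\mathrm{Sym}^4$ representation (binary quartics on the conic). Combining this action with additions of $\alpha q_1$ (which modifies the $YZ$- and $X^2$-coefficients simultaneously), I would bring the pure-$(X,Y,Z)$ part of $q_2$ into the form $XY$. The translation $T \mapsto T + \ell(X,Y,Z)$ coming from the lower-left block of~\eqref{eqn:elts_pgl5_fixing_cone} (with $h_1=1$, $i_1=0$) then adds $\ell^2 + (a_3 X + a_6 Y + a_8 Z)\ell$ to the pure-$(X,Y,Z)$ part of $q_2$; choosing $\ell = fY + gZ$ suitably kills the $YT$ term (coefficient $a_6$) and allows the induced $Y^2, YZ, Z^2$ corrections to be reabsorbed by a further $q_1$-adjustment. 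A final rescaling of $T$ and of $q_2$ normalizes the $XY$, $ZT$, and $T^2$ coefficients to~$1$, leaving $a_3$ as the sole surviving parameter.

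For Stage~2 (normalizing $q_3$), given the form of $q_1, q_2$ produced in Stage~1, the generator replacement $q_3 \leadsto q_3 + \beta q_1 + \gamma q_2$ preserves the $U^2$-coefficient while eliminating the $XY$-coefficient $b_1$ (via $\gamma q_2$) and one of the $YZ$- or $X^2$-coefficients (via $\beta q_1$). Combined with the translation $U \mapsto U + \ell'(X,Y,Z,T)$ from the bottom row of~\eqref{eqn:elts_pgl5_fixing_cone}, this suffices to kill the remaining pure-$(X,Y,Z)$ coefficients $b_4, b_5, b_7$. Rescaling to restore $U^2=1$ yields the stated form of $q_3$.

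The main obstacle will be bookkeeping across the two stages: any coordinate change used in Stage~2 must either stabilize the specific normal form of $q_2$ produced in Stage~1, or its effect on $q_2$ must be reabsorbed by further generator replacements. A secondary point is that some of the coefficient manipulations in characteristic~$2$ reduce to Artin--Schreier-type equations $x^2 + cx = d$; these are solvable over $k = \Fbar_2$, so existence of the required transformations is never in doubt, although this would be the main worry if one tried to perform the reduction over a smaller field.
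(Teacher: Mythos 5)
Your overall framework---working inside the stabilizer \eqref{eqn:elts_pgl5_fixing_cone} of $q_1$, allowing generator replacements, and solving Artin--Schreier equations over $\Fbar_2$---is the same as the paper's, but your Stage~1 breaks down at two concrete points. First, the pure $(X,Y,Z)$-part of $q_2$ cannot in general be brought to $XY$ by the conic automorphisms together with additions of $\alpha q_1$: adding $\alpha q_1$ does not change the restriction of $q_2$ to the conic $V(q_1)\cong \P^1$, so at that stage you are only acting with $\PGL_2(k)$ on a binary quartic, and $\PGL_2$ preserves root multiplicities; $XY$ restricts to $s^3t$ (a quartic with a triple root), whereas a generic member of \eqref{eqn:starting_genus5_family_on_a_cone} restricts to a quartic with four distinct roots. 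Second, in characteristic $2$ the substitution $T\mapsto T+\ell(X,Y,Z)$ sends $Q+TL+T^2$ to $(Q+\ell L+\ell^2)+TL+T^2$: it leaves the $T$-linear form $L=a_3X+a_6Y+a_8Z$ untouched, so it cannot kill the $YT$-coefficient $a_6$; moreover the $Y^2$ and $Z^2$ terms of $\ell^2$ cannot be ``reabsorbed by a further $q_1$-adjustment,'' since adding a multiple of $q_1=X^2+YZ$ only alters the $X^2$ and $YZ$ coefficients.

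In fact the workable division of labor is essentially the reverse of yours, and this is how the paper proceeds: the $T$-translation (the entries $e_1,f_1,g_1$ of \eqref{eqn:elts_pgl5_fixing_cone}) is exactly what kills the pure-part coefficients $a_4,a_5,a_7$ (via $a_4'=f_1^2+f_1a_6+a_4$, etc., and similarly $e_2,f_2,g_2$ kill $b_4,b_5,b_7$), while the $\PGL_2$-block kills $a_6$ (via $a_6'=a_6a^2+a_3ac+a_8c^2$, solved for $c$) and, in combination with $b,e_1,g_1$, also $a_2$; a diagonal scaling then normalizes $a_1=a_8=1$, smoothness forcing $a_1,a_8\neq 0$. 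Your Stage~2 is closer to the mark, but note that replacing $q_3$ by $q_3+b_1q_2$ introduces a $T^2$ term, which is absent from the target form; the paper removes it with the substitution $U\mapsto U+h_2T$, choosing $h_2$ with $h_2^2+b_{12}h_2+b_1=0$. As written, your argument does not establish the lemma.
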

\begin{proof}
We start with $C$ as in \eqref{eqn:starting_genus5_family_on_a_cone} and consider the changes of coordinates induced by $M$ as in \eqref{eqn:elts_pgl5_fixing_cone} to obtain a simpler model for $C$.\footnote{For certain computations used below, we refer to \url{https://github.com/DusanDragutinovic/Examples}.} 
Recall that $M q_1 = q_1$, and choose the entries of $M$ as follows: $a = d = h_1 = i_2 = 1$ and $b = c = i_1 = h_2 = 0$. Then, modulo $q_1$, $M.q_2 = q_2'$ and $M.q_3 = q_3'$, where $q_2'$, and $q_3'$ are as in \eqref{eqn:starting_genus5_family_on_a_cone}, defined by some coefficients that we denote by $a_{j}'$ and $b_{j}'$, respectively. Moreover, modulo $q_1$, we have: $$a_4' = f_1^2 + f_1a_6 + a_4, \quad a_7' = g_1^2 + g_1a_8 + a_7, \quad a_5' = e_1^2 + e_1a_3 + g_1a_6 + f_1a_8 + a_5, $$
$$b_4' = f_2^2 + f_2(f_1b_{12} + b_{10}) + (f_1b_6 + b_4), \quad b_7' = g_2^2 + g_2(g_1b_{12} + b_{11}) +  (g_1b_8 + b_7), \text{ and}$$
$$b_5' = e_2^2 + e_2(e_1b_{12} + b_9) + f_2(g_1b_{12} + b_{11}) + g_2(f_1b_{12} + b_{10})  + e_1b_3 + g_1b_6 + f_1b_8 + b_5.$$
By appropriately choosing $e_1, f_1, g_1, e_2, f_2, g_2$ in $M$, we can make all the considered coefficients $a_i', b_i', i \in \{4, 5, 7\}$ vanish while preserving the shapes of the quadrics $q_2$ and $q_3$. Therefore, we can assume that $a_4 = a_5 = a_7 = b_4 = b_5 = b_7 = 0$, i.e., that $C$ has a model
\begin{equation}
C: \{q_1 = 0, q_2 = 0, q_3 = 0\} \text{ in } \P^4,    
\label{eqn:1}
\end{equation}
where $q_1 = X^2 + YZ$, $q_2 = a_1XY + a_2XZ + a_3XT + a_6YT + a_8ZT + T^2$ and $$q_3 = b_1XY + b_2XZ + b_3XT + b_6YT + b_8ZT + b_9XU + b_{10}YU + b_{11}ZU + b_{12}TU + U^2.$$

Consider the change of coordinates of $C$ as in \eqref{eqn:1} induced by a matrix $M$ as in \eqref{eqn:elts_pgl5_fixing_cone}. We can always choose $e_1, f_1, g_1$ (resp. $e_2, f_2, g_2$) in $M$ appropriately, regardless of the values of $a, b, c, d$ (resp. $a, b, c, d, e_1, f_1, g_1$), so that the change of coordinates of $C$ is again of the form \eqref{eqn:1}. Moreover, we may assume $a_6 = 0$: if $a_6 = 0$, nothing needs to be done; otherwise, we choose $a$ such that $a_6a^2 + a_3ac + a_8c^2 = 0$. This leads us to the model: \begin{equation}
C: \{q_1 = 0, q_2 = 0, q_3 = 0\} \text{ in } \P^4,    
\label{eqn:2}
\end{equation}
where $q_1 = X^2 + YZ$, $q_2 = a_1XY + a_2XZ + a_3XT + a_8ZT + T^2$ and $$q_3 = b_1XY + b_2XZ + b_3XT + b_6YT + b_8ZT + b_9XU + b_{10}YU + b_{11}ZU + b_{12}TU + U^2.$$

Now, we consider the model \eqref{eqn:2} of $C$. To avoid the singularity at $(0:1:0:0:0) \in C$, we must have $a_1 \neq 0$. Consider the change of coordinates of $C$ induced by $M$ as in \eqref{eqn:elts_pgl5_fixing_cone} with $i_1 = h_2 = c = f_1 = 0$ and $h_1 = i_2 = 1$. Then, modulo $q_1$, we get: 
\begin{align*}
 M.q_2 &= a^3da_1XY + (b^2 ada_1 + b de_1a_3 + ad^3a_2 + adg_1a_3 + d^2e_1a_8)XZ + ada_3XT \\ &+ (bda_3 + d^2a_8)ZT + T^2 + (a^2bda_1 + ade_1a_3 + e_1^2)YZ \\
  &+  (bd(b^2a_1 + d^2a_2) + g_1(bda_3 + d^2a_8) +g_1^2)Z^2.  
\end{align*}
Denote $a_2'= b^2 ada_1 + b de_1a_3 + ad^3a_2 + adg_1a_3 + d^2e_1a_8$, $a_5' = a^2bda_1 + ade_1a_3 + e_1^2,$ and $a_7' = bd(b^2a_1 + d^2a_2) + g_1(bda_3 + d^2a_8) +g_1^2$. Note that $a \neq 0$, $d \neq 0$, and $a_1 \neq 0$. 
We claim that, by appropriately choosing $b, e_1, g_1$, we can make $a_2' = a_5' = a_7' = 0$. 
Indeed: 
\begin{itemize}
    \item If $a_3 = 0$, first choose $e_1$ such that $e_1^2 = a^2bda_1$ to get $a_5' = 0$. Note that the term with $g_1$ in $a_2'$ vanishes since $a_3 = 0$ and solve $a_2'^2 = 0$ (where we replace $e_1^2$ by $a^2bda_1$) for $b$. Lastly, solve $a_7' = 0$ for $g_1$.
    \item If $a_3 \neq 0$, first choose $b = \frac{e_1^2 + ade_1a_3}{a^2da_1}$ to get $a_5' = 0$. Then, (after writing $b$ in terms of~$e_1$)    
    choose $g_1 = \frac{b^2 ada_1 + b de_1a_3 + ad^3a_2 + d^2e_1a_8}{ada_3}$ to get $a_2' = 0$. Lastly, solve $a_7' = 0$ for~$e_1$. 
\end{itemize} 
Finally, we choose $e_2, f_2, g_2$ (as before, this is possible regardless of the values of $a, b, c, d, e_1$, $f_1, g_1$) so that $q_3' = M.q_3$ modulo $q_1$ has the same form as $q_3$ in \eqref{eqn:2}. After these choices, we obtain the following model of $C$: 
\begin{equation}
C: \{q_1 = 0, q_2 = 0, q_3 = 0\} \text{ in } \P^4,    
\label{eqn:3}
\end{equation}
where $q_1 = X^2 + YZ$, $q_2 = a_1XY + a_3XT + a_8ZT + T^2, a_1 \neq 0$, and $$q_3 = b_1XY + b_2XZ + b_3XT + b_6YT + b_8ZT + b_9XU + b_{10}YU + b_{11}ZU + b_{12}TU + U^2.$$

This shows that any curve $C$ in \eqref{eqn:starting_genus5_family_on_a_cone} has a model of the form \eqref{eqn:3}; that is, we may assume $a_2 = a_4 = a_5 = a_6 = a_7 = b_4 = b_5 = b_7 = 0$ and $a_1 \neq 0$ in the starting model. Moreover, to avoid a singularity at $(0:0:1:0:0)$, we may assume $a_8 \neq 0$. Lastly, by taking $M$ as in~\eqref{eqn:elts_pgl5_fixing_cone} with $b = c = e_1 = f_1 = g_1 = i_1 = e_2 = f_2 = g_2 = h_2 = 0$,  $h_1 = i_2 = 1$, and $a, d$ such that 
$$a^3d = \frac{1}{a_1} \quad \text{ and } d^2 = \frac{1}{a_8},$$
we can assume that $a_1 = a_8 = 1$ to get $C$ as in \eqref{eqn:3} with $q_2 = XY + a_3XT + ZT + T^2$.

Finally, consider the transformation $M$ as in \eqref{eqn:elts_pgl5_fixing_cone} defined by $$a = d = h_1 = i_2 = 1, \quad b = c = e_1 = f_1 = g_1 = e_2 = f_2 = g_2 = 0,$$ and note that $\{q_1'= 0, q_2' = 0, q_3' = 0\}$ and $\{q_1'= 0, q_2' = 0, q_3' - b_1q_2' = 0\}$ define the same curve in $\P^4$, where $M.q_i = q_i'$ for $i = 1, 2, 3$. Therefore, the choice of $h_2$ such that $h_2^2 + b_{12}h_2 + b_1 = 0$, leads us to the model of $C$ as stated in \eqref{eqn:semimodel_genus5_ff2}.
\end{proof}

Since we are interested in studying supersingular curves lying on the quadric $X^2 + YZ = 0$ in $\P^4$, we first determine which curves $C$ defined by \eqref{eqn:semimodel_genus5_ff2} have $\tworank = 0$. We have the following lemma.

\begin{lem}
\label{lem:2rank0_genus5_firstequation}
Let $C$ be a variety defined by \eqref{eqn:semimodel_genus5_ff2}. Assume that $C$ is a smooth canonical curve of genus $5$. If $\tworank(C) = 0$ then $C$ has a model of the form $\{q_1 = 0, q_2 = 0, q_3 = 0\}$, where $q_1 = X^2 + YZ$, $q_2 = XY + ZT + T^2$, and $q_3$ equals 
\begin{equation*}
q_3 = b_2XZ + b_3XT + b_6YT + b_8ZT + b_{11}ZU + U^2,  
\end{equation*}
for some $b_6 \neq 0$.
Moreover, if $b_{11} \neq 0$, we may assume that 
\begin{equation}
q_3 = b_2XZ + b_3XT + b_6YT + b_8ZT + ZU + U^2.
\label{eqn:genus5_2rank0_4dim_family}
\end{equation}
\end{lem}
\begin{proof}
We compute\footnote{For certain computations used below, we refer to \url{https://github.com/DusanDragutinovic/Examples}.}  the Hasse-Witt matrix $H = H_C$ of $C$ as in \eqref{eqn:semimodel_genus5_ff2}  using \eqref{eqn:hasse-witt-cone-genus5-kudoharashita} as follows: 
$$H = \begin{pmatrix}
a_3b_9 + b_{10}& 0& b_{11} & b_{12} & 0\\
a_3b_{11} + b_9 & a_3b_{10} + b_{12}&  0 & 0 &  0\\
a_3b_{10} + b_{12} &  0 & a_3b_{11} + b_9 & 0 & 0\\
b_{11}  & b_{10}  &  0 & a_3b_{12} + b_9 & 0\\
b_8 &  b_6 & b_2 & b_3 & a_3
\end{pmatrix}.$$
The condition $\tworank(C) = 0$ is equivalent to $HH^{\sigma}H^{\sigma^2}H^{\sigma^3}H^{\sigma^4} = 0$. We will show that this is equivalent to $a_3 = b_9 = b_{10} = b_{12} = 0$. 
The $(5, 5)$-entry of the matrix $HH^{\sigma}H^{\sigma^2}H^{\sigma^3}H^{\sigma^4}$ equals $a_3^{31}$, and therefore $\tworank(C) = 0$ implies $a_3 = 0$. Furthermore, we claim that $b_{12} = 0$. If $b_{12} \neq 0$, we can scale $U$ using the transformation $U \mapsto b_{12}U$ and then divide out $q_3$ by~$b_{12}^2$; this is equivalent to assuming $b_{12} = 1$. Then, if we denote by $F_1$ and $F_2$ the $(2,2)$- and $(3,2)$-entries of $HH^{\sigma}H^{\sigma^2}H^{\sigma^3}H^{\sigma^4}$, respectively, it follows that $F_1 + b_9F_2 = 1$. This implies that $F_1$ and $F_2$ cannot both be equal to $0$, which means that $\tworank(C) \neq 0$. Therefore, $\tworank(C) = 0$ forces $b_{12} = 0$. Finally, assuming $a_3 = b_{12} = 0$, we find that the $(1,1)$- and $(3,3)$-entries of $HH^{\sigma}H^{\sigma^2}H^{\sigma^3}H^{\sigma^4}$ are $b_{10}^{31}$ and $b_9^{31}$, respectively, so that $\tworank(C) = 0$ implies $b_9 = b_{10} = 0$. Conversely, $a_3 = b_9 = b_{10} = b_{12} = 0$ implies that $HH^{\sigma}H^{\sigma^2}H^{\sigma^3}H^{\sigma^4} = 0$.

Let $C = V(q_1, q_2, q_3) \subseteq \P^4$ be as in \eqref{eqn:semimodel_genus5_ff2} with $a_3 = b_9 = b_{10} = b_{12} = 0$. Consider the Jacobian matrix of $C$ 
at $Q = (x_0:x_1:x_2:x_3:x_4) \in C$,  with $x_0 = X, x_1 = Y, x_2 = Z, x_3 = T, x_4 = U$: 
$$(\partial q_i/ \partial x_j)(Q) =  \begin{pmatrix}
0 & x_2 & x_1 & 0 & 0 \\
x_1 & x_0 & x_3 & x_2 & 0 \\
b_2x_2 + b_3x_3 & b_6x_3 & b_2x_0 + b_8x_3 + b_{11}x_4 & b_3x_0 + b_6x_1 + b_8x_2 & b_{11}x_2 \\
\end{pmatrix}.$$
To avoid a singularity at $Q = (0:1:0:0:0)$, we must have $b_6 \neq 0$. Furthermore, if $b_{11} \neq 0$, then $C$ is nonsingular at $Q$ with $x_2 \neq 0$. In this case, after scaling $U \mapsto b_{11}U$ and dividing by $b_{11}^2$, we may assume $b_{11} = 1$, leading to the model in \eqref{eqn:genus5_2rank0_4dim_family}.
\end{proof}

\subsection{Properties of curves $\{q_1 = 0, q_2 = 0, q_3 = 0\} \subseteq \P^4$ with $q_3$ as in \eqref{eqn:genus5_2rank0_4dim_family}}
\label{sub:part2}
We focus on curves defined by $\{q_1 = 0, q_2 = 0, q_3 = 0\}$ in $\P^4$, where $q_1 = X^2 + YZ, q_2 = XY +ZT + T^2$, and $q_3$ is as in \eqref{eqn:genus5_2rank0_4dim_family}. First, in the lemma below, we give a criterion for when $C$ is actually a smooth canonical curve of genus $5$.

\begin{lem} Let $C = V(q_1, q_2, q_3) \subseteq \P^4$, where $q_1 = X^2 + YZ, q_2 = XY +ZT + T^2$, and $q_3$ is as in~\eqref{eqn:genus5_2rank0_4dim_family}. If $b_6 \neq 0$, then $C$ is a smooth canonical curve of genus $5$ over $k = \Fbar_2$. 
\label{lem:smooth_dim1}
\end{lem}
\begin{proof}
Let $C = V(q_1, q_2, q_3) \subseteq \P^4$, with $q_1 = X^2 + YZ$, $q_2 = XY + ZT + T^2$, and $q_3$ as in~\eqref{eqn:genus5_2rank0_4dim_family}. Moreover, assume that $b_6 \neq 0$.
First, we show that $C$ is an irreducible algebraic variety, compute its function field $\kappa(C)$, and establish that $\mathrm{trdeg}_k(\kappa(C)) = 1$, which implies that $C$ is an irreducible algebraic curve in $\P^4$. Second, we show that $C$ is smooth, which completes the proof of the claim.

Since $C = V(q_1, q_2, q_3) \subseteq V(q_1, q_2) \subseteq V(q_1) \subseteq \P^4$, every component of $C$ has dimension~$\geq 1$. To show that $C$ is irreducible, it suffices to find an open dense subset $U_C = U \cap C \subseteq C$ and show that $U_C$ is irreducible. Take $U = \{Z \neq 0\} \cap \{b_3X + b_6Y + b_8Z \neq 0\}$ and note that $(\P^4 - U)\cap C$ consists of finitely many points, so that $U_C = U \cap C \subseteq C$ is an open dense subset. Writing $x = X/Z, y = Y/Z, t = T/Z$ and $u = U/Z$ and noting that $y = x^2$ and $t = \frac{u^2 + u + b_2x}{b_6x^2 + b_3x + b_8Z}$ on $U_C$, we see that $U_C$ is isomorphic to the curve $f(x, u) = 0$ in $\mathbb{A}^2_{x, u}$ with $$f(x, u) = u^4 + (1 + h(x))u^2 + h(x)u + (b_2^2x^2 + b_2xh(x) + x^3h(x)^2),$$ where $h(x) = b_6x^2 + b_3x + b_8$. Assume that $f(x, u) = (u^3 + \alpha_2u^2 + \alpha_1u + \alpha_0)(u + \beta_0)$ or $f(x, u) = (u^2 + \gamma_1u + \gamma_0)(u^2 + \delta_1u + \delta_0)$ for some polynomials $\alpha_i, \beta_i, \gamma_i,$ and $\delta_i$ in $k[x]$. By considering these as equalities of polynomials in $u$ with coefficients in $k[x]$ and comparing the coefficients of $u^i$ for $i \geq 1$, we find  $\alpha_2 = \beta_0, \alpha_1 = \beta_0^2 + 1 + h$, and $\alpha_0 = \beta_0^3 + \beta_0 + \beta_0h + h$, as well as $\gamma_1 = \delta_1, \gamma_0 = 1 + h + \delta_1^2 + \delta_0$, and $(\delta_1 + 1)(\delta_1^2 + \delta_1 + h) = 0$ (which implies that $\delta_1 = 1$ or $\deg_x(\delta_1) = 1$), respectively.  
Finally, comparing the coefficients of $u^0$ in the above equalities and analyzing their degrees (as elements of $k[x]$), we use the fact that $\deg_x(b_2^2x^2 + b_2xh(x) + x^3h(x)^2) = 7$ (since $b_6 \neq 0$) to conclude that there are no $\beta_0, \delta_0 \in k[x]$ satisfying them. This shows that $C$ is irreducible and, moreover, that $\kappa(C) \cong k(x,u)/(f(x,u))$ with $\mathrm{trdeg}_k(\kappa(C)) = 1$. Hence~$C$ is an irreducible algebraic curve in $\P^4$.

Consider the Jacobian matrix of $C = V(q_1, q_2, q_3) \subseteq \P^4$ 
at $Q = (x_0:x_1:x_2:x_3:x_4) \in C$,  with $x_0 = X, x_1 = Y, x_2 = Z, x_3 = T, x_4 = U$: 
$$(\partial q_i/ \partial x_j)(Q) =  \begin{pmatrix}
0 & x_2 & x_1 & 0 & 0 \\
x_1 & x_0 & x_3 & x_2 & 0 \\
b_2x_2 + b_3x_3 & b_6x_3 & b_2x_0 + b_8x_3 + x_4 & b_3x_0 + b_6x_1 + b_8x_2 & x_2 \\
\end{pmatrix}.$$
If $x_2 \neq 0$, then $\rank_k (\partial q_i/ \partial x_j)(Q) = 3$, so $C$ is smooth at $Q$. Otherwise, the conditions $x_2 = 0$ and $Q \in C$ imply $Q = (0:1:0:0:0)$. In this case, $\rank_k (\partial q_i/ \partial x_j)(Q) = 3$ if and only if~$b_6 \neq 0$, which holds by assumption.

To summarize: $C$ is a nonsingular irreducible curve that is a complete intersection of three quadrics in $\P^4$. Therefore, by \cite[Example IV.5.5.3]{hag}, $C$ is a smooth canonical curve of genus $5$ over $k = \Fbar_2$.
\end{proof}

In the following lemma, we show that any isomorphism between the curves considered in the preceding lemma is induced by a matrix $M \in \PGL_5(k)$ as in \eqref{eqn:elts_pgl5_fixing_cone}.

\begin{lem}
Let $C = V(q_1, q_2, q_3) \subseteq \P^4$ and $C' = V(q_1, q_2, q_3') \subseteq \P^4$ be two varieties, where $q_1 = X^2 + YZ, q_2 = XY +ZT + T^2$, $q_3 = b_2XZ + b_3XT + b_6YT + b_8ZT + ZU + U^2$, and $q_3' = b_2'XZ + b_3'XT + b_6'YT + b_8'ZT + ZU + U^2$ for some $b_2, b_2', b_3, b_3', b_6, b_6', b_8, b_8' \in k = \Fbar_2$ such that $b_6, b_6' \neq 0$. Then, any isomorphism $C \cong C'$ is induced by a projective transformation $M \in \PGL_5(k)$ that fixes $q_1$, i.e., by a matrix $M$ as in \eqref{eqn:elts_pgl5_fixing_cone}.  
\label{lem:description_of_all_isomphs}
\end{lem}
\begin{proof}
Note that both $q_3$ and $q_3'$ are of the form \eqref{eqn:genus5_2rank0_4dim_family}, so that $b_6, b_6' \neq 0$ and Lemma \ref{lem:smooth_dim1} imply that both $C \subseteq \P^4$ and $C' \subseteq \P^4$ are smooth canonical curves of genus $5$ over $k = \Fbar_2$. Hence any isomorphism $\phi: C \to C'$ is induced by some $M \in \PGL_5(k)$, i.e. $\phi = \phi_M$. We claim that $M$ is of the form \eqref{eqn:elts_pgl5_fixing_cone}, that is, $M.q_1 = q_1$.

Let $N = \begin{pmatrix}
\alpha_1 & \alpha_2 & \alpha_3 & \alpha_4 & \alpha_5 \\
\beta_1 & \beta_2 & \beta_3 & \beta_4 & \beta_5 \\
\gamma_1 & \gamma_2 & \gamma_3 & \gamma_4 & \gamma_5 \\
\end{pmatrix} \in k^{3 \times 5}$ be the matrix consisting of the top three rows of $M$, and note that $\phi_M: (X, Y, Z)^t \mapsto N \cdot (X, Y, Z, T, U)^t$.
Since $\phi_M: C \cong C'$, there exist $\alpha, \beta, \gamma \in k$ such that $M.q_1 = \alpha q_1 + \beta q_2 + \gamma q_3'$. It suffices to show that $\beta = \gamma = 0$; then, after rescaling (since $M \in \PGL_5(k)$), we may take $\alpha = 1$. By comparing the coefficients of $XU$, $YU$, $ZU$, and $TU$ in the equality $M.q_1 = \alpha q_1 + \beta q_2 + \gamma q_3'$, we obtain the following identities:
\begin{equation}
\beta_1\gamma_5 = \beta_5\gamma_1, \quad \beta_2\gamma_5 = \beta_5\gamma_2, \quad \beta_3\gamma_5 + \beta_5\gamma_3 = \gamma, \quad \beta_4\gamma_5 = \beta_5\gamma_4,
\label{eqn:XUYUTUZU}
\end{equation}
and similarly, by comparing the coefficients of $XY, XT, YT$, and $ZT$ we get: 
\begin{equation}
\beta_1\gamma_2 + \beta_2\gamma_1 = \beta, \quad \beta_1\gamma_4 + \beta_4\gamma_1 = b_3'\gamma, \quad \beta_2\gamma_4 + \beta_4\gamma_2 = b_6'\gamma, \quad \beta_3\gamma_4 + \beta_4\gamma_3 = \beta + b_8'\gamma.
\label{eqn:XYXTYTZT}
\end{equation}
There are three possibilities, and we show that $\beta = \gamma = 0$ in each of them: 
\begin{itemize}
    \item If $\beta_5 = \gamma_5 = 0$, then \eqref{eqn:XUYUTUZU} implies $\gamma = 0$. If moreover $\beta_4 = \gamma_4 = 0$, then the fourth equality in \eqref{eqn:XYXTYTZT} implies $\beta = 0$. Otherwise, $\beta_4 \neq 0$ or $\gamma_4 \neq 0$. If $\beta_4 \neq 0$ (resp.~$\gamma_4 \neq 0$), then the second and the third quality in \eqref{eqn:XYXTYTZT} imply $\gamma_1 = \frac{\gamma_4}{\beta_4}\beta_1$ and $\gamma_2 = \frac{\gamma_4}{\beta_4}\beta_2$ (resp.~$\beta_1 = \frac{\beta_4}{\gamma_4}\gamma_1$ and $\beta_2 = \frac{\beta_4}{\gamma_4}\gamma_2$), while the first one consequently gives us $\beta = 0$; 
    \item If $\beta_5 = 0$ and $\gamma_5 \neq 0$, then \eqref{eqn:XUYUTUZU} implies $\beta_1 = \beta_2 = \beta_4 = 0$, while the first and the third equality in \eqref{eqn:XYXTYTZT} and $b_6' \neq 0$ result in $\beta = \gamma = 0$. 
    \item Lastly, if $\beta_5 \neq 0$, then \eqref{eqn:XUYUTUZU} implies $\gamma_1 = \frac{\gamma_5}{\beta_5}\beta_1$, $\gamma_2 = \frac{\gamma_5}{\beta_5}\beta_2$ and $\gamma_4 = \frac{\gamma_5}{\beta_5}\beta_4$. Now, the first and the third equality in \eqref{eqn:XYXTYTZT} and $b_6' \neq 0$ imply  $\beta = \gamma = 0$.
\end{itemize}
Therefore, for any isomorphism $\phi = \phi_M: C \cong C'$, induced by $M \in \PGL_5(k)$, we have $M.q_1 = q_1$. It follows that $M$ is of the form \eqref{eqn:elts_pgl5_fixing_cone}.
\end{proof}

Now, we determine when two of these curves are isomorphic to each other. This analysis also leads to a better understanding of their automorphism groups.

\begin{prop}
Let $C$ be a smooth curve of genus $5$ over $k$ whose canonical model in $\P^4$ is given by $\{q_1 = 0, q_2 = 0, q_3 = 0\}$, where $q_1 = X^2 + YZ, q_2 = XY +ZT + T^2$, and $q_3$ is as in~\eqref{eqn:genus5_2rank0_4dim_family}, for some $b_2, b_3, b_6, b_8 \in k$, $b_6 \neq 0$. Then the following statements hold: 
\begin{enumerate}
    \item There are only finitely many smooth curves $C'$ of genus $5$ over $k$ given by the model $\{q_1 = 0, q_2 = 0, q_3' = 0\}$ in $\P^4$ such that $C \cong C'$, where $q_1$ and $q_2$ are as above, and $q_3' = b_2'XZ + b_3'XT + b_6'YT + b_8'ZT + ZU + U^2$, for some $b_2', b_3', b_6', b_8' \in k$, $b_6' \neq 0$.
    \item If $b_6 \neq b_3^2$, then $\Aut(C)$ is a group of order $2$, generated by the automorphism 
    \begin{equation}
     \iota: U \mapsto U + Z.
     \label{eqn:genus5_iota}
    \end{equation}
    If $b_6 = b_3^2$, then $\<\iota\>\times \<\eta\> \cong \ZZ/2\ZZ \times \ZZ/2\ZZ$ is a subgroup of $\Aut(C)$, where $\iota$ is as in~\eqref{eqn:genus5_iota}, and
    \begin{equation}
     \eta: T\mapsto Z + T,\quad  U \mapsto  b_3X + g_2Z + U,
     \label{eqn:genus5_eta}
    \end{equation}
for some $g_2\in k$ such that $g_2^2 + g_2 + b_8 = 0$. Moreover, $\Aut(C) \neq \<\iota\> \times \<\eta\>$ if and only if $b_8^2 + b(b_3^4 + b_3) = 0$ for some $b\in k$ such that $b^3 = 1$.  
\end{enumerate}
\label{prop:genus5ss_dimension_automorphisms}
\end{prop}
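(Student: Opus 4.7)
My plan is to classify the matrices $M \in \PGL_5(k)$ inducing isomorphisms $C \to C'$ (or automorphisms when $C = C'$). Since $q_1$ is, up to scalar, the unique rank-$3$ quadric in the ideal $(q_1, q_2, q_3)$, any such $M$ must fix $q_1$ and therefore has the form given in \eqref{eqn:elts_pgl5_fixing_cone}. The condition $M \cdot I(C)_2 = I(C')_2$ on degree-$2$ ideal parts translates to $M.q_2, M.q_3 \in \mathrm{span}(q_1, q_2', q_3')$, which I analyze modulo $q_1$ (substituting $X^2 \equiv YZ$) and compare coefficient-by-coefficient against $q_2', q_3'$.

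For (1), I would first extract constraints from the high-degree $U$- and $T$-monomials. The absence of $U$ in $q_1, q_2, q_2'$ together with the appearance of $U^2 + ZU$ (but no $T^2$ or $TU$) in $q_3'$ forces $i_1 = h_2 = 0$ and $h_1, i_2 \in k^\times$, which after $\PGL$-normalization we may take to be $1$. Matching the remaining coefficients of $M.q_2$ and $M.q_3$ produces a polynomial system in the upper-block parameters $a, b, c, d$ and the lower-block entries $e_j, f_j, g_j$. Eliminating variables in an appropriate order (starting from the $q_2$-constraints, which pin down $a, b, c, d$ up to a finite group, then using the $q_3$-constraints to fix $e_j, f_j, g_j$) yields a zero-dimensional solution set, so only finitely many $(b_2', b_3', b_6', b_8')$ can occur, proving (1).

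For (2), the two candidate involutions are verified directly. Under $\iota: U \mapsto U + Z$, in characteristic $2$ one has $Z(U+Z) + (U+Z)^2 = ZU + U^2$, so $q_3$ is fixed ($q_1, q_2$ are trivially fixed); hence $\iota \in \Aut(C)$ unconditionally. For $\eta: T \mapsto T+Z$, $U \mapsto U + b_3 X + g_2 Z$, direct substitution preserves $q_1$ and $q_2$, and a short calculation using $X^2 \equiv YZ$ gives
\[
\eta.q_3 \;\equiv\; q_3 + (b_6 + b_3^2)\,YZ + (g_2^2 + g_2 + b_8)\,Z^2 \pmod{q_1}.
\]
Since $YZ, Z^2 \notin \mathrm{span}(q_2, q_3) \pmod{q_1}$, this is an automorphism iff $b_6 = b_3^2$ and $g_2^2 + g_2 + b_8 = 0$. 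The involutions $\iota$ and $\eta$ commute by inspection, giving $\langle \iota \rangle \times \langle \eta \rangle \cong \ZZ/2\ZZ \times \ZZ/2\ZZ$. For the reverse inclusion and the ``Moreover'' claim, I specialize the constraint system from (1) to $C = C'$: the only matrices $M$ satisfying all constraints are $1, \iota, \eta, \iota\eta$ away from a special locus. Extra automorphisms correspond to solutions with a non-trivial upper block (i.e., $a \neq 1$ after normalization), and solving the resulting compatibility equations reduces to the cubic condition $b_8^2 + b(b_3^4 + b_3) = 0$ for some $b \in \Fbar_2$ with $b^3 = 1$.

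The main obstacle is the polynomial bookkeeping: the constraint system has many variables and the reduction modulo $q_1$ complicates direct comparison of quadrics. I expect to rely on a computer-algebra tool (e.g., Magma or Sage) to carry out the elimination systematically, both to confirm finiteness in (1) and to extract the precise cubic condition in the extra-automorphism characterization.
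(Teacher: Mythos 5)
Your overall strategy is the same as the paper's (classify the matrices $M\in\PGL_5(k)$ of the shape \eqref{eqn:elts_pgl5_fixing_cone} and compare $M.q_2$, $M.q_3$ coefficientwise against $\mathrm{span}(q_1,q_2',q_3')$), and your direct verification of $\iota$ and of the criterion $\eta.q_3\equiv q_3+(b_6+b_3^2)YZ+(g_2^2+g_2+b_8)Z^2 \pmod{q_1}$ is correct. But there is a genuine gap at the very first reduction: the span condition does \emph{not} force $h_2=0$. The $T^2$-coefficient of $M.q_3$ equals $h_2^2$, and since $M.q_3$ only has to lie in $\mathrm{span}(q_1,q_2',q_3')$ (not be proportional to $q_3'$ modulo $q_1$), this coefficient is simply absorbed into the $q_2'$-component, $\beta=h_2^2$. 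In the correct constraint system one finds $h_2^2=a^3b^2b_6'$ together with $a^3=1$, $b(b^3+1)=0$, $g_1^2+g_1+b^3=0$; so every admissible map with $b\neq 0$ (i.e.\ $b^3=1$) has $h_2\neq 0$. These maps are not exotic: they account for genuine isomorphisms $C\cong C'$ in part (1), and in part (2) the extra automorphisms of the ``Moreover'' clause have $a=1$, $b^3=1$, $h_2=bb_3\neq 0$ (note $b_3\neq 0$ whenever $b_6=b_3^2\neq 0$). So with $h_2=0$ imposed, your elimination in (1) only classifies a proper subset of the admissible $M$, and finiteness of the full set of curves $C'$ does not follow from it; worse, for (2) your restricted system has no solutions beyond $\{1,\iota,\eta,\iota\eta\}$, so the cubic condition $b_8^2+b(b_3^4+b_3)=0$ cannot be extracted from it — your stated conclusion is inconsistent with your own normalization. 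The fix is simply to drop the claim $h_2=0$, keep $\beta=h_2^2$ as an unknown, and run the elimination on the full system, as the paper does.

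Two smaller points. First, the extra automorphisms are characterized by $b\neq 0$ in the upper block while $a=1$ is forced (from $b_6=a^2b_6$), so ``$a\neq 1$ after normalization'' is not the right dichotomy. Second, in $\PGL_5$ you may rescale to make only one of $h_1,i_2$ equal to $1$; the other is then pinned down by the equations (e.g.\ $i_2=1$ follows from comparing the $ZU$- and $U^2$-coefficients of $M.q_3$), not by normalization. Relying on computer algebra for the elimination is fine, but only once the constraint system is the correct one.
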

\begin{proof}
By Lemma \ref{lem:smooth_dim1} and Lemma \ref{lem:description_of_all_isomphs}, $C \subseteq \P^4$ and $C'\subseteq \P^4$ are smooth canonical curves of genus $5$ and any isomorphism between them is induced by a projective automorphism $M \in \PGL_5(k)$, given by some $$M = M(a, b, c, d, e_1, f_1, g_1, h_1, i_1, e_2, f_2, g_2, h_2, i_2) \text{ as in \eqref{eqn:elts_pgl5_fixing_cone}},$$ with $ad - bc \neq 0$ and $h_1i_2 - h_2i_1 \neq 0$. 

Assume that the isomorphism $C\cong C'$ is given by such an $M$.\footnote{For certain computations used below, we refer to \url{https://github.com/DusanDragutinovic/Examples}.}  Denote by $M.q_2$ the result of the action of $M$ on $q_2$, induced by $(X, Y, Z, T, U)^t \mapsto M\cdot (X, Y, Z, T, U)^t.$ A priori, $(*): M.q_2 = \alpha\cdot q_1 + \beta\cdot q_2 + \gamma\cdot q_3'$ for some $\alpha, \beta, \gamma \in k$. We claim that $\gamma = 0$. Indeed, by comparing the coefficients of $U^2, T^2, YT$, and $YU$ in $(*)$, we find that  $\gamma = i_1^2, \beta = h_1^2$, and $c^2h_1 + i_1^2b_6 = c^2i_1 = 0$, and therefore $\gamma = i_1 = 0$, taking into account that $b_6 \neq 0$. Now, the identity $M.q_2 = \alpha \cdot q_1 + \beta \cdot q_2$ for some $\alpha, \beta \in k$ implies $\beta = h_1^2$ by comparing the coefficients of $T^2$. Since $M \in \PGL_5(k)$ and $\beta \neq 0$, we may assume $\beta = h_1^2 = 1$. In addition to the previously obtained values $\gamma = i_1 = 0$ and $\beta = h_1 = 1$, by considering the coefficients of $YT$, $Y^2$, $ZT$, $XZ$, $XY$, $YZ$ (modulo $q_1$), and $Z^2$, we find $c = f_1 = 0$, $d =  1$, $e_1 = ab^2$, and 
\begin{equation}
a^3 = 1, \quad b(b^3 + 1) = 0, \quad g_1^2 + g_1 + b^3 = 0.   
\label{eqn:genus5_q2}
\end{equation}

Finally, the preceding discussion implies that $(\#): M.q_3 = \alpha'\cdot q_1 + \beta'\cdot q_2 + \gamma' \cdot q_3'$ for some $\alpha', \beta', \gamma' \in k$ with $\gamma' \neq 0$. By comparing the coefficients of $ZU$ and $U^2$ in $(\#)$, we find $\gamma' = i_2^2 = i_2$, so that $i_2 = 1$ as $\gamma' \neq 0$.  Furthermore, by comparing the coefficients of $T^2, Y^2$, $YZ$ (modulo $q_1$), $Z^2, XY, XZ, XT, YT$, and $ZT$ in $(\#)$, we obtain $\beta' = h_2^2$, $f_2 = 0$, and the following identities:
\begin{equation}
e_2^2 = a^2b^2b_3 + a^2g_1b_6
 \label{eqn:genus5_YZ}
\end{equation}
\begin{equation}
g_2^2 + g_2 = b^2g_1b_6 + bg_1b_3 + bb_2 + g_1b_8 
 \label{eqn:genus5_Z^2}
\end{equation}
\begin{equation}
h_2^2 = a^3b^2b_6
 \label{eqn:genus5_XY}
\end{equation}
\begin{equation}
 b_2' = ab^4b_6 + ab^3b_3 + ab^2b_8 + ag_1b_3 + ab_2 + e_2
 \label{eqn:genus5_XZ}
\end{equation}
\begin{equation}
 b_3'= ab_3
 \label{eqn:genus5_XT}
\end{equation}
\begin{equation}
 b_6' = a^2b_6
 \label{eqn:genus5_YT}
\end{equation}
\begin{equation}
b_8' = b^2b_6 + h_2^2 + bb_3 + h_2 + b_8
 \label{eqn:genus5_ZT}
\end{equation}
Given $b_2, b_3, b_6, b_8 \in k, b_6 \neq 0$, we conclude there are only finitely many $$(a, b, c, d, e_1, f_1, g_1, h_1, i_1, e_2, f_2, g_2, h_2, i_2, b_2', b_3', b_6', b_8') \in k^{18}$$ such that the preceding identities are satisfied. Namely, given $a, b, g_1$ such that \eqref{eqn:genus5_q2} is satisfied, one can solve \eqref{eqn:genus5_YZ} for $e_2$, \eqref{eqn:genus5_Z^2} for $g_2$, 
\eqref{eqn:genus5_XY} for $h_2$, and then \eqref{eqn:genus5_XZ} for $b_2'$, \eqref{eqn:genus5_XT} for $b_3'$, \eqref{eqn:genus5_YT} for $b_6'$, and \eqref{eqn:genus5_ZT} for $b_8'$, where all these equations have at most finitely many solutions. 

To compute $\Aut(C)$, we choose $C = C'$ and use the identities (including \eqref{eqn:genus5_q2}-\eqref{eqn:genus5_ZT}) obtained in the first part of the proof of this proposition. In addition to $c = i_1 = f_1 = f_2 = 0$, $d = h_1 = i_2 = 1$, $e_1 = ab^2$, \eqref{eqn:genus5_YT} gives us that $a = 1$ as $b_6 \neq 0$, while substituting $h_2^2 = b^2b_6$ from~\eqref{eqn:genus5_XY} into \eqref{eqn:genus5_ZT} results in $h_2 = b\cdot b_3$. Moreover, we obtain $$b^2(b_3^2 + b_6) = 0,$$
by substituting $h_2 = b\cdot b_3$ in \eqref{eqn:genus5_ZT}.

Note that the choice $b = c = e_1 = e_2 = f_1 = f_2 = g_1 = i_1 = 0$ and $a = d = h_1 = i_2 = g_2 = 1$ satisfies the preceding equalities, and thus it defines an automorphism $\iota$ of $C$, given by 
$$\iota: U \mapsto U + Z;$$
this is an automorphism of order $2$.

If $b_6 \neq b_3^2$, then $b = 0$ and the equalities \eqref{eqn:genus5_q2}, \eqref{eqn:genus5_YZ}, and \eqref{eqn:genus5_XZ} imply $g_1 = 0$. Together with \eqref{eqn:genus5_YZ} and \eqref{eqn:genus5_Z^2}, this further gives $e_2 = 0$ and $g_2(g_2 + 1) = 0$. Choosing $g_2 = 0$ yields a trivial automorphism of $C$, while $g_2 = 1$ gives the automorphism $\iota$ described above. These are all possibilities; hence, in the case $b_6 \neq b_3^2$, $C$ has no additional automorphisms.

Now let $b_6 = b_3^2$. By \eqref{eqn:genus5_q2}, we have $b(b^3 + 1) = 0$ and $g_1^2 + g_1 + b^3 = 0$. 
If $b = g_1 = 0$, then \eqref{eqn:genus5_YZ} and \eqref{eqn:genus5_Z^2} imply $e_2 = 0$ and $g_2(g_2 + 1) = 0$, giving either the trivial automorphism of $C$ when $g_2 = 0$ or the automorphism $\iota$ as described above when $g_2 = 1$. If $b = 0$ and $g_1 = 1$, then \eqref{eqn:genus5_YZ} and \eqref{eqn:genus5_Z^2} imply $e_2 = b_3$ and $g_2^2 + g_2 + b_8 = 0$. Therefore, for any $g_2 \in k$ satisfying $g_2^2 + g_2 + b_8 = 0$, all the equalities considered above hold, so that $$\eta: T\mapsto Z + T, U \mapsto b_3X + g_2Z + U$$ defines an automorphism of $C$ of order $2$. (In fact, once a solution $g_2$ to this equation is fixed, the automorphism corresponding to the other solution $g_2'$ equals $\iota \circ \eta$.)
If $b \neq 0$, then $b^3 = 1$. Now, \eqref{eqn:genus5_q2}, \eqref{eqn:genus5_YZ}, and \eqref{eqn:genus5_XZ} result in $b_8^2 + b (b_3^4 + b_3) = 0$. In particular, if $b_8^2 + b(b_3^4 + b_3) \neq 0$, then $C$ has no additional automorphisms. If $b_8^2 + b(b_3^4 + b_3) = 0$, then the additional automorphisms of $C$ are determined by $$b^3 = 1,\text{ } g_1^2 + g_1 = b^3, \text{ } e_2 = b b_3^2 + b_3 + b^2 b_8 + g_1b_3, \text{ }  \text{ and } g_2^2 + g_2 + (g_1(b^2b_3^2 + bb_3 +  b_8)+ bb_2) = 0.$$  We conclude that $$\Aut(C) = \<\iota\>\times \<\eta\>$$ for $C$ as above with $b_6 = b_3^2$ if and only if $b_8^2 + b(b_3^4 + b_3) \neq 0$.  
\end{proof}

Now, we further investigate the properties of curves $C$ as in the preceding proposition when $b_6 = b_3^2$. In this case, we have found two automorphisms of $C$, denoted by $\iota$ and~$\eta$. As we will see, understanding the quotient maps $$C \to C/\< \iota \>\quad \text{ and } \quad C \to C/\< \eta \>$$ will provide valuable information that we use to better understand $C$ itself. These maps are of degree~$2$ and are determined by the inclusions of function fields $\kappa(C/\< \iota \>) = \kappa(C)^{\< \iota \>} \subseteq \kappa(C)$ (resp.~$\kappa(C/\< \eta \>) = \kappa(C)^{\< \eta \>} \subseteq \kappa(C)$) of $C/\< \iota \>$ (resp.~$C/\< \eta \>$) into the function field of $C$.

\begin{prop}
\label{prop:genus5ss_doublecover_bielliptic_bi2}
Let $C$ be a smooth curve of genus $5$ over $k$ whose canonical model in~$\P^4$ is given by $\{q_1 = 0, q_2 = 0, q_3 = 0\}$, where $q_1 = X^2 + YZ, q_2 = XY +ZT + T^2$, and $q_3 = b_2XZ + b_3XT + b_3^2YT + b_8ZT + ZU + U^2$, for some $b_2, b_3, b_8 \in k$, $b_3 \neq 0$. Let $\iota$ and $\eta$ be elements of $\Aut(C)$ as in \eqref{eqn:genus5_iota} and \eqref{eqn:genus5_eta}, respectively. Then:
\begin{enumerate}
    \item $C/\<\iota\>$ is an elliptic curve, and the map induced by $\eta$ does not act trivially on the function field of $C/\<\iota\>$;
    \item $C/\<\eta\>$ and $C/\<\iota \circ \eta\>$ are two curves of genus $2$.  
\end{enumerate}
\end{prop}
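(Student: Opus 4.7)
The plan is to pass to the affine chart $Z \neq 0$ of $\P^4$, in which (using $y = x^2$ from $q_1 = 0$) the function field $k(C)$ is presented as $k(x, t, u)$ subject to
\begin{equation*}
t^2 + t = x^3, \qquad u^2 + u = b_2 x + b_3 x t + b_3^2 x^2 t + b_8 t,
\end{equation*}
coming from $q_2 = 0$ and $q_3 = 0$. In these coordinates the two involutions act by $\iota\colon u \mapsto u + 1$ (fixing $x, t$) and $\eta\colon t \mapsto t + 1$, $u \mapsto u + b_3 x + g_2$ (also fixing $x$). Everything else is extracted from this presentation.

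For (1), I would argue $k(C)^{\langle \iota \rangle} = k(x, t)$: the inclusion is clear, and $[k(C) : k(x, t)] = 2$ because the minimal polynomial of $u$ over $k(x, t)$ is $U^2 + U + (b_2 x + b_3 x t + b_3^2 x^2 t + b_8 t)$. The subfield $k(x, t)$ with relation $t^2 + t = x^3$ is the function field of a smooth elliptic curve in characteristic $2$ (alternatively, it coincides with the smooth complete intersection $V(q_1, q_2) \subseteq \P^3$ of type $(2, 2)$, as a short Jacobian check confirms). Since $\eta$ acts on $k(x, t)$ as $t \mapsto t + 1$, it descends to a non-trivial involution of $C/\langle \iota \rangle$, completing (1).

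For (2), the strategy is to produce an $\eta$-invariant $v \in k(C)$, derive an explicit Artin--Schreier equation for $(x, v)$, and then invoke the genus formula. Setting $v := u + (b_3 x + g_2) t$, the identity $\eta(v) = v$ is a direct computation in characteristic $2$. Expanding $v^2 + v$ and simplifying with the defining relations and the key fact $g_2^2 + g_2 = b_8$, the $t^2$ and $t$ terms collapse into a single factor of $t^2 + t = x^3$, yielding
\begin{equation*}
v^2 + v = b_3^2 x^5 + g_2^2 x^3 + b_2 x.
\end{equation*}
Since $b_3 \neq 0$ and only odd-degree monomials in $x$ appear, this equation is already in reduced Artin--Schreier form with pole order exactly $5$ at $\infty$, so the standard genus formula for Artin--Schreier double covers of $\P^1$ in characteristic $2$ gives genus $(5-1)/2 = 2$. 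To see $k(x, v) = k(C)^{\langle \eta \rangle}$, I combine $k(x, v) \subseteq k(C)^{\langle \eta \rangle}$ with $[k(C) : k(x, v)] \leq 2$ (as $t$ satisfies $T^2 + T + x^3 = 0$ over $k(x, v)$) and $t \notin k(x, v)$ (because $\eta(t) \neq t$). The quotient $C/\langle \iota \circ \eta \rangle$ is then handled identically, with $g_2$ replaced by the conjugate root $g_2 + 1$ of $X^2 + X + b_8 = 0$, producing a second genus-$2$ Artin--Schreier curve.

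The main technical step I expect to be the bottleneck is the verification that $v^2 + v$ truly simplifies to $b_3^2 x^5 + g_2^2 x^3 + b_2 x$; this requires using $g_2^2 + g_2 = b_8$ and $t^2 + t = x^3$ in the right order so that all $t$-terms disappear and the leading monomial in $x$ has degree exactly $5$. Once this polynomial identity is in hand, the Artin--Schreier genus formula together with the elementary index count for the fixed field closes the argument.
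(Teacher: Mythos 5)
Your proposal is correct, and the overall strategy coincides with the paper's (work in the chart $Z\neq 0$, present $k(C)$ by the two Artin--Schreier relations $t^2+t=x^3$ and $u^2+u=b_2x+b_3xt+b_3^2x^2t+b_8t$, and compute the fixed fields explicitly); part (1) is essentially identical to the paper's argument. Where you genuinely diverge is in part (2): the paper takes the multiplicative invariant $y_1=u\,(u+b_3x+g_2)$, which yields a degree-$7$ equation $y_1^2+(b_3x+g_2+1)y_1+F_1=0$ and then requires a rescaling $x_2=b_3x+g_2+1$, $y_2=y_1/x_2$ plus removal of the $x_2^{-1},x_2^{-2}$ tail (using $c_{-1}^2=c_{-2}$) before the genus can be read off; your additive invariant $v=u+(b_3x+g_2)t$ lands directly in reduced form. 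Your flagged ``bottleneck'' identity does hold: in characteristic $2$,
\begin{equation*}
v^2+v=(u^2+u)+(b_3x+g_2)^2t^2+(b_3x+g_2)t,
\end{equation*}
and substituting $t^2=t+x^3$ and the relation for $u^2+u$, the coefficient of $t$ is $b_8+g_2^2+g_2=0$ while the $xt$- and $x^2t$-terms cancel in pairs, leaving $v^2+v=b_3^2x^5+g_2^2x^3+b_2x$; since $b_3\neq 0$ the pole order at infinity is $5$, odd, so the Artin--Schreier genus formula gives genus $2$ at once. Your index bookkeeping ($k(x,v)\subseteq k(C)^{\langle\eta\rangle}$, $[k(C):k(x,v)]\le 2$ via $u=v+(b_3x+g_2)t$ and the quadratic for $t$, and $\eta$ nontrivial on $k(C)$) is the same as what the paper needs implicitly, and the case of $\iota\circ\eta$ is indeed obtained by replacing $g_2$ with $g_2+1$, exactly as in the paper. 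In short: same method, but your choice of invariant buys a cleaner, shorter computation for part (2), while the paper's route produces the explicit (if messier) models $F_1,F_2,F_3$ it records.
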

\begin{proof}
We first compute the function field of $C/\<\iota\>$, that is, the subfield $\kappa(C)^{\<\iota\>}$ of $\kappa(C)$ consisting of elements fixed by $\iota$. Write $x = X/Z, y = T/Z, u = U/Z$, and note that $Y/Z = x^2$ because of $q_1 = 0$. The function field of $C$ equals $$\kappa(C) = k(x, y, u)/(y^2 + y + x^3, u^2 + u + b_3^2x^2y + b_3xy + b_2x + b_8y). $$
Since $u'= u^2 + u$ is  fixed by $\iota$, we find that  
\begin{align*}
\kappa(C/\<\iota\>) &= k(x, y, u')/(y^2 + y + x^3, u' + b_3^2x^2y + b_3xy + b_2x + b_8y)\\
&\cong k(x, y)/(y^2 + y + x^3),
\end{align*}
which is the function field of the unique (up to isomorphism) supersingular elliptic curve $E: y^2 + y = x^3$ in characteristic $2$. Moreover, $\eta: y \mapsto y + 1$ and $y$ is not fixed by~$\eta$, so that $\eta$ does not act trivially on $\kappa(C/\<\iota\>)$.

Now, we compute the function field of $C/\<\eta\>$. Note that computing $\kappa(C/\<\iota \circ \eta\>)$ is completely analogous since $\iota\circ \eta: T \mapsto T + Z, U \mapsto U + b_3Z + g_2'Z$, where $g_2'\neq g_2$ is another solution to $g_2^2 + g_2 + b_8 = 0$. Denote $x_1 = X/Z, x_1^2 = Y/Z, t = T/Z$, and $y_0 = U/Z$, using $q_1 = 0$. From $q_3 = 0$, we can express $t$ in terms of $x_1$ and $y_0$, and put it into $q_2 = 0$. In these coordinates, we see that $\eta: y_0 \mapsto y_0 + b_3x_1 + g_2$, where $g_2^2 + g_2 = b_8$, so that $y_1 = y_0(y_0 + b_3x_1 + g_2)$ is an element fixed by $\eta$. It follows that $\kappa(C/\<\eta\>) = k(x_1, y_1)/(y_1^2 + (b_3x_1 + g_2 + 1)y_1 + F_1)$, 
where $$F_1 = b_3^4x_1^7 + b_3^2x_1^5 + (g_2^4 + b_2b_3^2 + g_2^2)x_1^3 + (b_2^2 + b_2b_3)x_1^2 + (g_2^2b_2 + g_2b_2)x_1,$$ or equivalently, $\kappa(C/\<\eta\>) = k(x_2, y_2)/(y_2^2 + y_2 + F_2)$, where $x_2 = b_3x_1 + g_2 + 1$, $y_2 = y_1/ x_2$, and 
\begin{align*}
F_2 &= \frac{1}{b_3^3}x_2^5 + \frac{g_2 + 1}{b_3^3}x_2^4 + \frac{g_2^2}{b_3^3}x_2^3 + \frac{g_2^3 + g_2^2}{b_3^3}x_2^2 + \frac{b_2b_3^2 + g_2^2 + 1}{b_3^3}x_2 + \\
 & + \frac{g_2b_2b_3^2 + g_2^3 + b_2^2b_3 + g_2^2 + g_2 + 1}{b_3^3} +  \frac{g_2b_2 + b_2}{b_3}x_2^{-1} + \frac{g_2^2b_2^2 + b_2^2}{b_3^2}x_2^{-2}.
\end{align*}
Let $c_{-1}$ and $c_{-2}$ denote the coefficients of $x_{2}^{-1}$ and $x_2^{-2}$ in $F_2$, respectively. Clearly, $c_{-1}^2 = c_{-2}$, and therefore,  the function field of $C/\<\eta\>$ equals $$\kappa(C/\<\eta\>) = k(x, y)/(y^2 + y + F_3(x)),$$ where $x = x_2$, $y = y_2 + c_{-1}x^{-1}$, and $F_3(x) \in k[x]$ is a polynomial of degree $5$. We conclude that $C/\<\eta\>$ is a smooth curve of genus $2$. 
\end{proof}

\section{Proof of the main theorem and some remarks}
\label{sec:proof}

We now prove Theorem \ref{thm:supersingular_family_genus5_char2_dim3}, the main result of this article, and provide some further discussion. 
The formulation presented in Section \ref{sec:intro} follows after relabeling $b_1 = b_8$.

\begin{thm}
\label{thm:supersingular_family_genus5_char2_dim3_2}
Let $b_2, b_3, b_8 \in k = \Fbar_2$, $b_3 \neq 0$ and let $C = V(q_1, q_2, q_3) \subseteq \P^4$, where 
\begin{equation}
\begin{cases}
q_1 = X^2 + YZ, \\
q_2 = XY + ZT + T^2,\\
q_3 = b_2XZ + b_3XT + b_3^2YT + b_8ZT + ZU + U^2. 
\end{cases} 
\label{eqn:supersingular_3dimfamily_genus5_2}
\end{equation}
Then, $C$ is a smooth canonical curve of genus $5$ over $\Fbar_2$ which is supersingular and satisfies the following properties:
\begin{enumerate}
    \item $\ZZ/2\ZZ \times \ZZ/2\ZZ \subseteq \Aut(C)$;
    \item There exists a double cover $C \to E$, where $E$ is an elliptic curve;
    \item There exists a double cover $C \to D$, 
    where $D$ is a smooth curve of genus $2$;
        \item The $a$-number of $C$ equals $a(C) = 2$.
\end{enumerate}
Furthermore, let $\cS$ denote the family which consists of the isomorphism classes of all curves~$C$ as in \eqref{eqn:supersingular_3dimfamily_genus5_2}. Then $\dim \cS = 3$.   
\end{thm}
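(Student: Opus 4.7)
The plan is to verify items (1)--(4) and the dimension count by specializing the earlier propositions, then to bootstrap supersingularity from $p$-rank zero via a Kani--Rosen isogeny decomposition associated to the Klein-four action on $C$.

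Items (1)--(3) are essentially direct specializations. I would observe that our $q_3$ in \eqref{eqn:supersingular_3dimfamily_genus5_2} is precisely the shape of \eqref{eqn:genus5_2rank0_4dim_family} specialized to $b_6 = b_3^2$ and $b_3 \neq 0$, so Proposition~\ref{prop:genus5ss_dimension_automorphisms}(2) yields $\<\iota\>\times\<\eta\> \cong \ZZ/2\ZZ \times \ZZ/2\ZZ \subseteq \Aut(C)$, and Proposition~\ref{prop:genus5ss_doublecover_bielliptic_bi2} produces the double covers $C \to C/\<\iota\>$ onto the supersingular elliptic curve $y^2+y = x^3$ and $C \to C/\<\eta\>$ onto a smooth genus-$2$ curve.

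For supersingularity I would invoke the Kani--Rosen decomposition associated to $G=\<\iota,\eta\>\cong (\ZZ/2\ZZ)^2$, which (from the idempotent identity $\epsilon_{\{1\}}+2\epsilon_{G}=\epsilon_{\<\iota\>}+\epsilon_{\<\eta\>}+\epsilon_{\<\iota\eta\>}$ in $\Q[G]$) produces the isogeny
\[
\Jac(C)\times \Jac(C/G)^{2} \sim \Jac(C/\<\iota\>)\times \Jac(C/\<\eta\>) \times \Jac(C/\<\iota\eta\>).
\]
By Proposition~\ref{prop:genus5ss_doublecover_bielliptic_bi2} the right-hand side has total dimension $1+2+2=5=g(C)$, which forces $g(C/G)=0$; hence $\Jac(C) \sim E \times \Jac(D) \times \Jac(D')$ with $D,D'$ two genus-$2$ quotients. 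Since Proposition~\ref{prop:2rank0_genus5_firstequation} gives $f(C)=0$ and $E$ is already supersingular, additivity of $p$-rank under isogeny forces $f(D)=f(D')=0$; in genus~$2$ the $p$-rank-zero locus coincides with the supersingular locus, so all three factors are supersingular and thus so is $\Jac(C)$.

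For (4), specializing the Hasse--Witt matrix from the proof of Proposition~\ref{prop:2rank0_genus5_firstequation} at $a_3=b_9=b_{10}=b_{12}=0$, $b_{11}=1$, $b_6=b_3^2$ makes rows~$2$ and~$3$ vanish while rows~$1$, $4$, $5$ are visibly linearly independent, so $\rank(H)=3$ and $a(C)=5-3=2$. For $\dim\cS$, the identities $b_3'=ab_3$ and $b_6'=a^2b_6$ from \eqref{eqn:genus5_XT}--\eqref{eqn:genus5_YT} show that the condition $b_6=b_3^2$ is preserved by the isomorphisms allowed in Proposition~\ref{prop:genus5ss_dimension_automorphisms}(1), so that proposition specializes to say the map from the three-dimensional parameter space $\{(b_2,b_3,b_8)\in k^3: b_3\neq 0\}$ to $\cS$ is finite-to-one, yielding $\dim\cS=3$. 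The main hurdle I anticipate is the Kani--Rosen step together with the verification that $g(C/G)=0$; once those are in place, everything else reduces to genus-$2$ specifics and the inspection of a sparse matrix.
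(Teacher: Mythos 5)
Your proposal is correct, and its overall structure coincides with the paper's: items (1)--(3) are obtained by specializing Propositions~\ref{prop:genus5ss_dimension_automorphisms} and~\ref{prop:genus5ss_doublecover_bielliptic_bi2}, the $a$-number comes from the rank of the specialized Hasse--Witt matrix (your rank-$3$ count is right, since $b_6=b_3^2\neq 0$), and $\dim\cS=3$ follows from the finiteness statement in Proposition~\ref{prop:genus5ss_dimension_automorphisms}(1), noting as you do that $b_3'=ab_3$, $b_6'=a^2b_6$ preserve the locus $b_6=b_3^2$. The one place where you genuinely diverge is the justification of the central isogeny $\cJ_C\sim \cJ_{C/\<\iota\>}\oplus\cJ_{C/\<\eta\>}\oplus\cJ_{C/\<\iota\circ\eta\>}$: the paper argues directly that the pullbacks of the three quotient Jacobians inside $\cJ_C$ meet pairwise in $0$-dimensional subvarieties (using that $\eta^*$ acts nontrivially on the pullbacks of $\cJ_{C/\<\iota\>}$ and $\cJ_{C/\<\iota\circ\eta\>}$) and then counts dimensions, whereas you invoke the Kani--Rosen idempotent relation for $G=(\ZZ/2\ZZ)^2$, which is valid in characteristic $2$ and moreover forces $g(C/G)=0$ from the dimension count $5+2g(C/G)=1+2+2$. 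Your route buys a clean citation of a standard theorem and removes the need for the ad hoc intersection argument; the paper's route is self-contained and exploits the specific information from Proposition~\ref{prop:genus5ss_doublecover_bielliptic_bi2} about how $\eta$ acts on the quotients. The final step (2-rank $0$ plus additivity of the $p$-rank under isogeny, supersingularity of $p$-rank-$0$ curves of genus $\leq 2$) is the same in both arguments, so your proof is complete as outlined.
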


\begin{proof}
Lemma \ref{lem:smooth_dim1} shows that $C$ is a smooth canonical curve of genus $5$, while Lemma~\ref{lem:2rank0_genus5_firstequation} implies that $\tworank(C) = 0$. By the second part of Proposition~\ref{prop:genus5ss_dimension_automorphisms}, it follows that $\<\iota\>\times \<\eta\> \subseteq \Aut(C)$, where $\iota: U \mapsto U + Z$ and $\eta: T \mapsto T + Z$, $U \mapsto U + b_3X + g_2Z$ are two automorphisms of order $2$, where $g_2 \in k$ is such that $g_2^2 + g_2 = b_8$. By Proposition~\ref{prop:genus5ss_doublecover_bielliptic_bi2}, $C \to C/\<\iota\>$ is a double cover of an elliptic curve, while  $C \to~C/\<\eta\>$ and~$C \to C/\<\iota \circ \eta\>$ are two double covers of genus $2$ curves. 

Using the properties above, we can now prove that $C$ is a supersingular curve. Consider the quotient maps $$C \to C/\<\iota\>,\quad C \to C/\<\eta\>,\quad \text{and}\quad C \to C/\<\iota\circ \eta\>.$$ The pullbacks of $\cJ_{C/\<\iota\>}$, $\cJ_{C/\<\eta\>}$, and $\cJ_{C/\<\iota\circ \eta\>}$ to $\cJ_C$ are subvarieties of $\cJ_C$. The induced automorphism $\eta^*$ acts trivially on the pullback of $\cJ_{C/\<\eta\>}$ but not on the pullbacks of $\cJ_{C/\<\iota\circ \eta\>}$ (otherwise, $C/\<\iota\>$ would have genus $2$) or $\cJ_{C/\<\iota\>}$ (by Proposition \ref{prop:genus5ss_doublecover_bielliptic_bi2}).
Consequently, the intersection of the pullback of $\cJ_{C/\<\eta\>}$ to $\cJ_C$ with the pullbacks of $\cJ_{C/\<\iota\>}$ and $\cJ_{C/\<\iota\circ \eta\>}$ is $0$-dimensional. Similarly, the intersection of the pullbacks of $\cJ_{C/\<\iota\>}$ and $\cJ_{C/\<\iota\circ \eta\>}$ to $\cJ_C$ is $0$-dimensional.
Hence, there is an isogeny 
\begin{equation}
\cJ_C \sim \cJ_{C/\<\iota\>}\oplus \cJ_{C/\<\eta\>}\oplus \cJ_{C/\<\iota\circ \eta\>}.
\label{eqn:isogeny_genus5_char2}
\end{equation}
Finally, all the quotient curves are supersingular since they are of genus $g \in \{1, 2\}$ and have $\tworank = 0$, taking into account that $\tworank(C) = 0$. Hence, $C$ is supersingular.

Let us prove the remaining claims. If we denote by $H$ the Hasse-Witt matrix of $C$ as in~\eqref{eqn:hasse-witt-cone-genus5-kudoharashita}, we compute $H =   \begin{pmatrix}
0& 0& 1 & 0 & 0\\
0 & 0&  0 & 0 &  0\\
0 &  0 & 0 & 0 & 0\\
1  & 0  &  0 & 0 & 0\\
0 &  b_3^2 & b_2 & b_3 & 0
\end{pmatrix}$ and conclude that $a(C) = 5 - \rank_k(H) = 2$ as $b_3 \neq 0$. Lastly, consider the family $\cS$ consisting of all isomorphism classes of curves $C$ as in \eqref{eqn:supersingular_3dimfamily_genus5_2}, for $b_2, b_8 \in k$, $b_3 \in k^*$. As a consequence of the first part of Proposition \ref{prop:genus5ss_dimension_automorphisms}, it follows that $\dim \cS = 3$. 
\end{proof}

\begin{rem}
In fact, we can avoid using the condition ${\tworank(C) = 0}$ in the proof of Theorem~\ref{thm:supersingular_family_genus5_char2_dim3}. From the proof of Proposition \ref{prop:genus5ss_doublecover_bielliptic_bi2}, we observe that $C/\langle \iota \rangle$ is the elliptic curve defined by the equation $y^2 + y = x^3$, which is the unique supersingular elliptic curve in characteristic~$2$, up to isomorphism. Curves $C/\langle \eta \rangle$ and $C/\langle \iota \circ \eta \rangle$ are both hyperelliptic curves of genus~$2$ given by equations of the form $y^2 + y = F(x)$, where $F(x) \in k[x]$ is a polynomial of degree~$5$. Moreover, they are both supersingular since \cite[Proposition 4.1]{scholtenzhu} implies that
$$
\tworank(C/\langle \eta \rangle) = \tworank(C/\langle \iota \circ \eta \rangle) = 0.
$$ 
Finally, the isogeny \eqref{eqn:isogeny_genus5_char2} shows that $C$ itself is supersingular.
\end{rem}

\begin{rem}
Another way to conclude that $a(C) = 2$ for curves $C$ as in Theorem \ref{thm:supersingular_family_genus5_char2_dim3} is as follows. By \cite[Lemma 1.2]{looi}, any such curve $C$ admits an even theta characteristic $L$, i.e., a line bundle on $C$ satisfying $L^2 = K_C$, where $K_C$ is the canonical bundle, and such that $\dim H^0(C, L) = 2$. Since $\tworank(C) = 0$, it follows from \cite[Propositions 3.1 and~3.3]{stohrvoloch} that $a(C) = \dim H^0(C, L) = 2.$
\end{rem}

We conclude with a corollary that shows the generic Newton polygon of the locus of all curves as in Proposition \ref{prop:genus5ss_dimension_automorphisms} is not supersingular and discusses the possibility that $\cS$ is a component of $\cM_5^{\ss}$ in $\cM_5$.

\begin{cor}
\label{cor:ss_genus5_component_or_not}
Let $\mathcal{R} \subseteq \cM_5$ be the family of smooth curves of genus~$5$ over $k$ whose canonical model in $\P^4$ is given by $\{q_1 = 0, q_2 = 0, q_3 = 0\}$, where $q_1 = X^2 + YZ$, ${q_2 = XY + ZT + T^2}$, and $q_3$ as in \eqref{eqn:genus5_2rank0_4dim_family}, for some $b_2, b_3, b_8 \in k$ and $b_6 \in k^*$, and let $\cS$ be the sublocus of $\mathcal{R}$ defined by $b_6 = b_3^2$ (i.e., $\cS$ is as in Theorem \ref{thm:supersingular_family_genus5_char2_dim3_2}). Then the following properties hold:
\begin{enumerate}
    \item The generic point of the $4$-dimensional locus $\mathcal{R}$ corresponds to a curve whose Newton polygon is not supersingular. 
    \item At least one of the following two holds:
    \begin{itemize}
        \item There is a component $\Gamma \subseteq \cM_5^{\ss}$ in characteristic $2$ with $\dim \Gamma \geq 4$; or
        \item $\cS \subseteq \cM_5$ is an intersection of $\mathcal{R}$ and a $4$-dimensional locus whose generic Newton polygon has slopes $\frac{2}{5}$ and $\frac{3}{5}$; in this case, $\cS$ is a component of the supersingular locus of $\cM_5$ with a non-trivial generic automorphism group.  
    \end{itemize}
\end{enumerate}
\end{cor}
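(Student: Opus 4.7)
The plan is to treat parts (1) and (2) in sequence.

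For part (1), the dimension $\dim\mathcal{R}=4$ follows from Proposition~\ref{prop:genus5ss_dimension_automorphisms}(1): the parametrization $(b_2,b_3,b_6,b_8)\in k^{3}\times k^{*}\to\mathcal{R}$ has finite isomorphism fibers. To identify the generic Newton polygon, I observe that the involution $\iota\colon U\mapsto U+Z$ lies in $\Aut(C)$ for every $C\in\mathcal{R}$ by Proposition~\ref{prop:genus5ss_dimension_automorphisms}(2), and the function-field computation of Proposition~\ref{prop:genus5ss_doublecover_bielliptic_bi2} applies verbatim for arbitrary $b_6\neq 0$, showing $C/\langle\iota\rangle\cong E$, the unique supersingular elliptic curve in characteristic~$2$. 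Hence $\cJ_C\sim E\times P_C$ with $P_C$ a $4$-dimensional Prym variety, which inherits $p$-rank zero from $C$ via Proposition~\ref{prop:2rank0_genus5_firstequation}. I would then exhibit an explicit member $C_0\in\mathcal{R}$ (for instance one defined over a small extension of $\F_2$), compute the characteristic polynomial of Frobenius on its crystalline cohomology modulo a low power of $p$, and verify that the Newton polygon of $C_0$ is $(1/3,1/2,1/2,2/3)$; the lower semicontinuity of the Newton polygon then forces the generic Newton polygon of $\mathcal{R}$ to lie at or below this one, while the slope-$1/2$ segment forced by the $E$-factor rules out strictly lower $p$-rank-zero Newton polygons for genus $5$.

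For part (2), I would apply the de Jong--Oort purity theorem (Section~\ref{subsec:dejongoort_purity}) to $\mathcal{R}$: the closed sublocus $\mathcal{R}^{>}\subseteq\mathcal{R}$ on which the Newton polygon strictly exceeds $(1/3,1/2,1/2,2/3)$ is pure of codimension one, hence of dimension three. Since $\cS\subseteq\mathcal{R}^{>}$ also has dimension three, it is a union of components of $\mathcal{R}^{>}$. Let $\Gamma\subseteq\cM_5^{\ss}$ be a component containing $\cS$. If $\dim\Gamma\geq 4$, we are in the first horn of the dichotomy. Otherwise $\dim\Gamma=\dim\cS=3$, and irreducibility forces $\Gamma=\cS$, realizing $\cS$ as a component of $\cM_5^{\ss}$. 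Among $p$-rank-zero Newton polygons for $g=5$, the only one strictly between $(1/3,1/2,1/2,2/3)$ and the supersingular polygon is $(2/5,3/5)$; any non-supersingular component of $\mathcal{R}^{>}$ must therefore have this as its generic Newton polygon, and spreading out in $\cM_5$ produces a $4$-dimensional locus $\mathcal{T}$ with generic Newton polygon $(2/5,3/5)$ whose intersection with $\mathcal{R}$ recovers $\cS$. The non-triviality of the generic automorphism group of $\cS$ is then Theorem~\ref{thm:supersingular_family_genus5_char2_dim3_2}(1).

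The main obstacle I anticipate is the identification of the generic Newton polygon in part (1): neither the $p$-rank nor the $a$-number alone distinguishes the candidate Newton polygons for genus~$5$, so an explicit computation on a representative curve --- or equivalently a dimension-counting argument inside $\cA_4$ applied to the family of Pryms $P_C$ --- will be essential. Once that is in hand, part (2) reduces to a clean bookkeeping exercise in purity.
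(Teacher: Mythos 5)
In part (1) your method for pinning down the generic Newton polygon does not work. The double cover $C \to E = C/\langle \iota\rangle$ onto the supersingular elliptic curve, which you correctly observe exists for \emph{every} member of $\mathcal{R}$ (the computation in Proposition~\ref{prop:genus5ss_doublecover_bielliptic_bi2} indeed never uses $b_6=b_3^2$), only forces the polygon of $\cJ_C \sim E \times P_C$ to contain a slope-$\frac{1}{2}$ segment of length $2$. Among the $p$-rank-zero genus-$5$ polygons lying strictly below $(\frac13,\frac12,\frac12,\frac23)$ this excludes $(\frac15,\frac45)$ but \emph{not} the polygon with slopes $\frac14,\frac12,\frac34$ of multiplicities $4,2,4$, which is exactly what occurs if the generic Prym $P_C$ has polygon $(\frac14,\frac34)$. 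So your explicit member $C_0$ only bounds the generic polygon from above in the specialization order, your stated reason for the matching lower bound is false, and the $a$-number (equal to $2$ on all of $\mathcal{R}$) cannot distinguish the candidates either. The paper closes this gap differently: it uses de Jong--Oort purity for the irreducible $4$-dimensional $\mathcal{R}$ together with the fact that $\cS$ is a supersingular \emph{divisor} in $\mathcal{R}$, and then identifies the generic polygon with that of the explicit curve $b_2=b_3=b_8=0$, $b_6=1$, whose slopes are read off from the point counts $(5,9,11,17,25)$ over $\F_{2^n}$, $1\le n\le 5$.

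In part (2) your opening dichotomy (a component $\Gamma\subseteq\cM_5^{\ss}$ containing $\cS$ either has $\dim\Gamma\ge 4$ or equals $\overline{\cS}$) is correct but proves strictly less than the second bullet, and the step that is supposed to supply the rest fails: ``spreading out in $\cM_5$ produces a $4$-dimensional locus $\mathcal{T}$ with generic polygon $(\frac25,\frac35)$ whose intersection with $\mathcal{R}$ recovers $\cS$'' is asserted, not proved, and purity applied inside the $4$-dimensional $\mathcal{R}$ can only ever produce $3$-dimensional loci such as $\cS$ itself ($\mathcal{R}^{>}$ may well equal $\cS$). The missing idea, which is the heart of the paper's argument, is a dimension bound from the ambient moduli space: every component of the Newton locus in $\cM_5$ with generic slopes $\frac13,\frac12,\frac12,\frac23$ has dimension at least $5$, by \cite[Theorem~4.2]{oort_np} and the smoothness of $\cA_5$. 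Hence $\mathcal{R}$ is \emph{strictly} contained in such a component $\mathcal{R}'$ with $\dim\mathcal{R}'\ge 5$; since $\cS$ then has codimension at least $2$ in $\mathcal{R}'$, a second application of purity inside $\mathcal{R}'$ produces a $4$-dimensional irreducible locus $\cS'$ with $\cS\subsetneq\cS'\subsetneq\mathcal{R}'$ whose generic polygon differs from $(\frac13,\frac12,\frac12,\frac23)$, and the genus-$5$ classification forces it to be supersingular (first horn) or $(\frac25,\frac35)$ (second horn). Without that external input your proposal cannot produce the $4$-dimensional locus required by the statement.
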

\begin{proof}
 As we discussed in the first part of Proposition \ref{prop:genus5ss_dimension_automorphisms}, $\mathcal{R}$ is irreducible and~${\dim \mathcal{R} =4}$. By Theorem \ref{thm:supersingular_family_genus5_char2_dim3_2}, $\cS \subseteq \mathcal{R}$ is a divisor whose generic Newton polygon is supersingular. The locus in $\mathcal{R}$ where the Newton polygon is supersingular is closed (see, e.g., \cite[Section~1]{oort_np}), so its closure contains no points with a different Newton polygon. Thus, to show that the generic Newton polygon of $\mathcal{R}$ is not supersingular, it suffices to find a single $[C] \in \mathcal{R}$ with non-supersingular Newton polygon. 
Take $C$ as in \eqref{eqn:genus5_2rank0_4dim_family} with $b_2 = b_3 = b_8 = 0$ and $b_6 = 1$. Then $C$ is a smooth genus $5$ curve over $\F_2$ with $[C] \in \mathcal{R}$. Using \cite{sagemath}, we compute $(\#C(\F_{2^n}))_{1 \leq n \leq 5} = (5, 9, 11, 17, 25)$, which implies that the slopes of the Newton polygon of~$C$ are $\frac{1}{3}, \frac{1}{2}, \frac{1}{2}, \frac{2}{3}$; see, e.g., the description in Subsection \ref{subsub:np_of_smooth_curves}. Hence, the generic Newton polygon of $\mathcal{R}$ is not supersingular.

By Proposition \ref{prop:genus5ss_doublecover_bielliptic_bi2}, every $[C] \in \mathcal{R}$ admits a double cover $C \to E$, where $E$ is an elliptic curve. Since the Newton polygon is invariant under isogeny and $\mathcal{R}$ is contained inside the $2$-rank $0$ locus, only three types can occur on~$\mathcal{R}$: the supersingular one, the polygon with slopes $\frac{1}{3}, \frac{1}{2}, \frac{1}{2}, \frac{2}{3}$, or the polygon with slopes $\frac{1}{4}, \frac{1}{2}, \frac{3}{4}$. 
Thus, the generic Newton polygon of $\mathcal{R}$ must be one of the latter two, and let us assume it has slopes $\frac{1}{3}, \frac{1}{2}, \frac{1}{2}, \frac{2}{3}$ (resp.~$\frac{1}{4}, \frac{1}{2}, \frac{3}{4}$). 
By the purity of de Jong-Oort (see Subsection~\ref{subsec:dejongoort_purity}), \cite[Theorem~4.1]{oort_np} and the smoothness of $\cA_5$, every component of the Newton polygon locus in $\cM_5$ with generic slopes $\frac{1}{3}, \frac{1}{2}, \frac{1}{2}, \frac{2}{3}$ (resp.~$\frac{1}{4}, \frac{1}{2}, \frac{3}{4}$) must have dimension at least $5$ (resp.~$6$), so that $\mathcal{R}$ cannot be such a component. 
Therefore, there is a component $\mathcal{R}'$ of the mentioned Newton polygon locus such that $\dim \mathcal{R}' \geq 5$ (resp.~$\dim \mathcal{R}' \geq 6$) and $\mathcal{R} \subsetneq \mathcal{R}'$. 
Since $\cS$ has codimension at least $2$ (resp.~$3$) in $\mathcal{R}'$, a final application of the purity of de Jong-Oort yields a $4$-dimensional irreducible locus $\cS' \subseteq \cM_5$ (resp.~and a $5$-dimensional $\cS'' \subseteq \cM_5$) such that $\cS \subsetneq \cS' \subsetneq \mathcal{R}'$ (resp. $\cS \subsetneq \cS' \subsetneq \mathcal{S}'' \subsetneq \mathcal{R}'$) and the generic Newton polygons of $\cS'$ (resp.~$\cS''$) and $\mathcal{R}'$ are different. By the classification of Newton polygons in dimension $5$ (see, e.g., \cite[Section 1]{oort_np}), 
the generic Newton polygon of $\cS'$ is either supersingular or has slopes~$\frac{2}{5}$ and $\frac{3}{5}$. 
\end{proof}

\end{document}